\theoremstyle{definition}
\newtheorem{theorem}{Theorem}[section]
\newtheorem{lemma}[theorem]{Lemma}
\newtheorem{proposition}[theorem]{Proposition}
\newtheorem{corollary}[theorem]{Corollary}
\numberwithin{equation}{section}
\newcommand{\Rmnum}[1]{\expandafter\@slowromancap\romannumeral #1@}
\begin{document}
	
	\title{The Michael-Simon-Sobolev inequality on manifolds
			for positive symmetric tensor fields}
	
	\author{
		Yuting Wu 
		\thanks{School of Mathematical Sciences, East China Normal University, 500 Dongchuan Road, Shanghai 200241,
			P. R. of China, E-mail address: 52215500001@stu.ecnu.edu.cn. }
		\and
		Chengyang Yi 
		\thanks{School of Mathematical Sciences, Tongji University, 
			1239 Siping Road, Shanghai 200092, P. R. of China, E-mail address: cyyi@tongji.edu.cn. }	
		\and
		Yu Zheng 
		\thanks{School of Mathematical Sciences, East China Normal University, 500 Dongchuan Road, Shanghai 200241,
			P. R. of China, E-mail address: zhyu@math.ecnu.edu.cn. }	
	}
	
	\date{}
	
	\maketitle
	
	\begin{abstract}
		We prove the Michael-Simon-Sobolev inequality for smooth symmetric uniformly positive definite $\left( 0,2\right)$-tensor fields on compact submanifolds with or without boundary
		in Riemannian manifolds with nonnegative sectional curvature
	    by the Alexandrov-Bakelman-Pucci (ABP) method.
		It should be a generalization of S. Brendle in \cite{Bren}.
	\end{abstract}

   	Keywords: the Michael-Simon-Sobolev inequality; 
   	compact submanifold; nonnegative sectional curvature.

	\section{Introduction}
	
	Geometric inequalities are indispensable tools in geometric analysis, serving as vital bridges to understanding, comparing, and quantifying geometric quantities.
	They empower mathematicians to prove theorems, derive properties, and solve complex problems.
    Two notable examples are the Li–Yau estimate and the Sobolev inequality.
	The Li–Yau estimate is a powerful tool for estimating the decay rate of solutions to the heat equation. 
	Recent advancements were made by Q. S. Zhang and X. L. Li, who introduced matrix Li–Yau–Hamilton estimates for positive solutions to the heat equation and the backward conjugate heat equation under Ricci flow (see \cite{QZ}), 
	their work builds upon earlier contributions (see \cite{CAO, CH, HA, PR, YU, QZ}).
	In parallel, 
    numerous experts have undertaken further research into the Sobolev inequality (see \cite{LL, Wang, Joh, Dong22, Ma, DP, Kr, BE}).
	In 2018, D. Serre (see \cite{DS}) established a new sharp Sobolev type inequality on a bounded convex domain $\Omega$ for a symmetric positive semi-definite (0,2)-tensor field in $\mathbb{R}^{n}$.
	In 2019, S. Brendle (see \cite{Bre}) proved a new Michael-Simon-Sobolev inequality for functions on compact submanifolds of arbitrary dimension and codimension in Euclidean space using the Alexandrov-Bakelman-Pucci (ABP) method, 
	the inequality is sharp when the codimension is at most 2, 
	resolving a long-standing conjecture.  
	Building upon this work, S. Brendle (see \cite{Bren}) extended his findings in \cite{Bre} to the scenario where the ambient space is a Riemannian manifold with nonnegative curvature. 
	Inspired by the contributions of D. Serre (see \cite{DS}) and S. Brendle (see \cite{Bre}),
	D. Pham (see \cite{DP}) established a Sobolev inequality involving a positive symmetric matrix-valued function $A$ on a smooth bounded convex domain in $\mathbb{R}^{n}$, albeit without the convexity condition of $\Omega$.

	In this paper, inspired by \cite{DP} and \cite{Bren},
	we present a new Michael-Simon type Sobolev inequality 
	which replaces the positive symmetric matrix-valued function in \cite{DP} with a smooth symmetric uniformly positive definite 
	$\left( 0,2\right)$-tensor field 
    on a compact submanifold in Riemannian manifolds with nonnegative sectional curvature
	using the ABP method.
	Our result represents a generalization of both D. Serre’s inequality and a extension of S. Brendle’s inequality.
	
	Let $M$ be a complete noncompact Riemannian manifold of dimension $k$ with nonnegative Ricci curvature. Denote by $\left| B^{k}\right| $ the volume of the unit ball in $\mathbb{R}^{k}$.
	The asymptotic volume ratio of $M$ is defined as
	\begin{equation*}  
		\theta:=\underset{r\rightarrow\infty} {\lim}\dfrac{\left\lbrace p\in M: d(p, q)\leq r\right\rbrace }{|B^{k}|r^{k}}
	\end{equation*}
	where $q$ is some fixed point on the manifold.
    According to the Bishop-Gromov volume comparison theorem,  
    the limit exists and that $\theta\leq1$.
    
    We get the following principal theorem:
	\begin{theorem}	\label{th:01}
		Let $M$ be a complete noncompact manifold of dimension $n+m$
		with nonnegative sectional curvature,
		where $m\geq2$.
		Let $\Sigma$ be a compact $n$-dimensional submanifold of $M^{n+m}$ with smooth boundary $\partial\Sigma$ (possibly  $\partial\Sigma=\varnothing$).
		If $A$ is a smooth symmetric uniformly positive definite $\left( 0,2\right)-$tensor field on $\Sigma$,  then
		\begin{equation}  \label{eq:1.1}
			\int_{\Sigma}\sqrt{\left| \mathrm{div}_{\Sigma}A\right|^{2}+\left| \left\langle A,\Rmnum{2}\right\rangle \right|^{2}}
			+\int_{\partial\Sigma}\left| A\left( \nu\right) \right| 
			\geq n\left[ \frac{\left( n+m\right)\left| B^{n+m}\right|}{m\left| B^{m}\right|}\right] ^{\frac{1}{n}}\theta^{\frac{1}{n}}\left( \int_{\Sigma}\left( \mathrm{det}A\right)^{\frac{1}{n-1}}\right) ^{\frac{n-1}{n}},
		\end{equation}
		where $\nu$ is the unit outer normal vector field on $\partial\Sigma$ with respect to $\Sigma$, 
	    $\Rmnum{2}$ is the second fundamental form of $\Sigma$,
	    and $\theta$ is the asymptotic volume ratio of $M$.
	\end{theorem}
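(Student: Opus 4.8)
The plan is to adapt Brendle's ABP argument from \cite{Bren}, now pushing a matrix-weighted flux through the construction. Fix a smooth vector field; the natural candidate is to solve a Neumann-type problem that encodes the tensor $A$. Concretely, I would look for $u\in C^\infty(\Sigma)$ solving
\begin{equation*}
\mathrm{div}_\Sigma\!\left(A\,\nabla_\Sigma u\right) = c - \sqrt{|\mathrm{div}_\Sigma A|^2 + |\langle A,\mathrm{II}\rangle|^2}\quad\text{in }\Sigma, \qquad \langle A(\nabla_\Sigma u),\nu\rangle = |A(\nu)|\ \text{on }\partial\Sigma,
\end{equation*}
where the constant $c$ is forced by the divergence theorem so that the left-hand side of \eqref{eq:1.1} equals $c\,|\Sigma|$. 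Existence and $C^{2,\alpha}$-regularity follow from standard linear elliptic theory since $A$ is uniformly positive definite (this is where uniform positivity, not mere positivity, is used). Then I would define the transport/contact set $U\subset\Sigma$ of points where $u$ is "touched from below" in the appropriate sense after flowing in the normal bundle, exactly as in Brendle: for $r>0$ and $p\in U$, consider the map $\Phi_r(p,\cdot)$ sending a normal vector to $\exp_p\big(r\,(\text{something built from }A\,\nabla_\Sigma u \text{ and normal directions})\big)$.

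The key steps, in order: (1) set up the PDE above and record the identity $\int_\Sigma(\text{LHS integrand}) = c|\Sigma|$; (2) define $U = \{p\in\Sigma : r|A(\nabla_\Sigma u)(p)| < c^{-1}\cdots\}$-type contact set and the transport map $\Phi:U\times\{\text{normal ball}\}\to M$, chosen so that every point of the geodesic ball $B(q_0,\text{radius})$ in $M$ of the relevant radius is covered — this is the ABP surjectivity step, proved by a shortest-geodesic / minimization argument using the Neumann boundary condition to rule out boundary escape; (3) show that on $U$ the Jacobian of $\Phi$ is controlled: $|\det D\Phi_r| \le \big(c^{-1}[\cdots]\big)^n r^{m-1}(1+o(1))$, using nonnegative sectional curvature to bound the Jacobi-field part (the ambient curvature enters exactly here, via a Rauch/Heintze–Karcher comparison giving $\le 1$) and the arithmetic–geometric mean inequality to pass from $\mathrm{tr}(A^{-1}\nabla^2_\Sigma(A\nabla u))$-type quantities to $(\det A)^{1/(n-1)}$; (4) integrate $|\det D\Phi_r|$ over $U\times(\text{normal ball of radius }r)$, use surjectivity onto a Euclidean-sized ball and the asymptotic volume ratio $\theta$ (letting $r\to\infty$, invoking Bishop–Gromov exactly as Brendle does) to get the volume of that ball $\ge \theta|B^{n+m}|r^{n+m}$; (5) combine, divide by $r^{n+m}$, send $r\to\infty$, and read off $c|\Sigma| \ge n\big[\tfrac{(n+m)|B^{n+m}|}{m|B^m|}\big]^{1/n}\theta^{1/n}\big(\int_\Sigma(\det A)^{1/(n-1)}\big)^{(n-1)/n}$ after optimizing the elementary inequality that appears.

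The main obstacle I expect is step (3): getting the Jacobian estimate in the matrix-weighted setting. In Brendle's scalar case the relevant Hessian term is $\Delta_\Sigma u$ and AM–GM gives $\det(\mathrm{Id}+\tfrac{1}{c}\cdots)\le(1+\tfrac{1}{nc}\Delta_\Sigma u)^n$ cleanly; here the tangential part of $D\Phi$ involves $\nabla_\Sigma\big(A(\nabla_\Sigma u)\big)$, which is not symmetric a priori, so I must symmetrize using that $\mathrm{div}_\Sigma A$ contributes a first-order term, and then apply the matrix AM–GM to the symmetric part while the antisymmetric part only helps (or is absorbed) in the determinant. Getting the constant $\big[\tfrac{(n+m)|B^{n+m}|}{m|B^m|}\big]^{1/n}$ exactly right requires carefully tracking the normal $B^m$-ball factor $r^{m-1}$ against the total $r^{n+m}$ and doing the one-variable optimization in $c$; I expect the interplay of $\det A$ (through the Hadamard-type bound $\det(A\nabla^2 u + \ldots)$) with the $(n-1)$st-root exponent to be the delicate bookkeeping. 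A secondary subtlety is verifying that the Neumann condition $\langle A(\nabla_\Sigma u),\nu\rangle = |A(\nu)|$ (rather than $\partial_\nu u = 1$) still forces the contact map to avoid escaping through $\partial\Sigma$; this should follow because at a boundary touching point the minimizing configuration would violate that condition, but it needs the Cauchy–Schwarz bound $\langle A(\nabla_\Sigma u),\nu\rangle \le |A(\nu)|\,|\nabla_\Sigma u|_A$-style estimate to be checked.
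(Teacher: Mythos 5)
Your overall strategy (ABP via a Neumann problem for $\mathrm{div}_\Sigma(A(\nabla^\Sigma u))$, a normal-bundle transport map, nonnegative curvature entering through Jacobi-field comparison, and a matrix AM--GM step) is the right family of ideas, but two of your concrete choices create genuine gaps. First, the normalization: you solve $\mathrm{div}_\Sigma(A(\nabla^\Sigma u)) = c - \sqrt{|\mathrm{div}_\Sigma A|^2 + |\langle A,\Rmnum{2}\rangle|^2}$ with a \emph{constant} $c$ fixed so that the left-hand side of \eqref{eq:1.1} equals $c|\Sigma|$. Running the ABP machine with this equation, the Cauchy--Schwarz plus trace step bounds $\langle A, D^2_\Sigma u - \langle\Rmnum{2},y\rangle\rangle$ by $c$, so after letting $r\to\infty$ you obtain an estimate of the form $c^n \int_\Omega (\det A)^{-1} \gtrsim \theta$, i.e. a lower bound for $c|\Sigma|$ in terms of $|\Sigma|\big(\int_\Sigma(\det A)^{-1}\big)^{-1/n}$. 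By H\"older, $|\Sigma| \le \big(\int_\Sigma(\det A)^{-1}\big)^{1/n}\big(\int_\Sigma(\det A)^{1/(n-1)}\big)^{(n-1)/n}$, so this is \emph{weaker} than \eqref{eq:1.1}; the H\"older inequality goes the wrong way and no optimization in $c$ can recover the stated right-hand side. The paper instead normalizes (by scaling invariance) so that the left-hand side equals $n\int_\Sigma(\det A)^{1/(n-1)}$ and puts the \emph{function} $n(\det A)^{1/(n-1)}$, not a constant, on the right-hand side of the PDE; this is exactly what makes the Jacobian bound close up as $r^m(\det A)^{-1}\big(\tfrac{\mathrm{tr}_g A}{n} + r(\det A)^{1/(n-1)}\big)^n$, whose leading term is $(\det A)^{1/(n-1)}r^{n+m}$.

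Second, your transport map and Jacobian analysis are set up around $A(\nabla^\Sigma u)$, which is why you anticipate trouble with the non-symmetry of $\nabla_\Sigma(A(\nabla^\Sigma u))$. The correct map is the same one as in Brendle, $\Phi_r(x,y)=\exp_x\big(r\nabla^\Sigma u(x)+ry\big)$, built from $\nabla^\Sigma u$ alone; then the Jacobian bound (Brendle's lemma, unchanged) is $|\det D\Phi_r|\le r^m\det\big(g + rD^2_\Sigma u - r\langle\Rmnum{2},y\rangle\big)$, a determinant of a \emph{symmetric} nonnegative matrix, and no symmetrization or "antisymmetric part helps" argument is needed or available. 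The tensor $A$ enters only algebraically afterwards: write $\det(g+rD^2_\Sigma u - r\langle\Rmnum{2},y\rangle) = (\det A)^{-1}\det\big(A\circ(g+rD^2_\Sigma u - r\langle\Rmnum{2},y\rangle)\big)$ and apply the matrix AM--GM inequality $\det(AS)\le(\mathrm{tr}(AS)/n)^n$ together with Cauchy--Schwarz and the PDE to get $\langle A, D^2_\Sigma u - \langle\Rmnum{2},y\rangle\rangle \le n(\det A)^{1/(n-1)}$ on the contact set. Two further points you leave unresolved: the normal fiber contributes $r^m$ (not $r^{m-1}$), and the constant $\big[\tfrac{(n+m)|B^{n+m}|}{m|B^m|}\big]^{1/n}$ is not obtained by optimizing in $c$ but by the annulus trick ($\sigma r < d(x,p) < r$, the inequality $b^{m/2}-a^{m/2}\le\tfrac m2(b-a)$ for $m\ge2$, then dividing by $1-\sigma$ and letting $\sigma\to1$). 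Your treatment of the boundary condition and of the covering step is in the right spirit and matches the paper, but as written the proposal would not produce inequality \eqref{eq:1.1}.
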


    When $m=2$,
    because of $\left( n+2\right)\left| B^{n+2}\right|=2\left| B^{2}\right| \left| B^{n}\right|$,
    we establish a sharp Sobolev inequality for submanifolds of codimensional 2 as follows:
    \begin{corollary}
    	Let $M$ be a complete noncompact manifold of dimension $n+2$
    	with nonnegative sectional curvature,
    	Let $\Sigma$ be a compact $n$-dimensional submanifold of $M^{n+2}$ with smooth boundary $\partial\Sigma$ 
    	(possibly  $\partial\Sigma=\varnothing$).
    	If $A$ is a smooth symmetric uniformly positive definite $\left( 0,2\right)-$tensor field on $\Sigma$,
    	then
    	\begin{equation*}
    		\int_{\Sigma}\sqrt{\left| \mathrm{div}_{\Sigma}A\right|^{2}+\left| \left\langle A,\Rmnum{2}\right\rangle \right|^{2}}
    		+\int_{\partial\Sigma}\left| A\left( \nu\right) \right| 
    		\geq
    		n\left| B^{n}\right|^{\frac{1}{n}}\theta^{\frac{1}{n}}
    		\left( \int_{\Sigma}\left( \mathrm{det}A\right)^{\frac{1}{n-1}}\right) ^{\frac{n-1}{n}},
    	\end{equation*}
    	where $\nu$ is the unit outer normal vector field on $\partial\Sigma$ with respect to $\Sigma$, 
        $\theta$ is the asymptotic volume ratio of $M$.
    \end{corollary}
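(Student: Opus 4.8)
The corollary is the case $m=2$ of Theorem~\ref{th:01}: putting $m=2$ in \eqref{eq:1.1} and using the identity $(n+2)\left|B^{n+2}\right|=2\left|B^{2}\right|\left|B^{n}\right|$ recorded above, the constant $n\big[\tfrac{(n+2)\left|B^{n+2}\right|}{2\left|B^{2}\right|}\big]^{1/n}\theta^{1/n}$ collapses to $n\left|B^{n}\right|^{1/n}\theta^{1/n}$, while nothing else in \eqref{eq:1.1} changes. So the corollary needs only Theorem~\ref{th:01}, and since that theorem carries all the content, the plan is to outline how we would prove \emph{it} by the Alexandrov--Bakelman--Pucci method of S.~Brendle \cite{Bren}, borrowing from D.~Serre \cite{DS} and D.~Pham \cite{DP} the way the tensor $A$ is handled. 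Throughout write $f:=(\mathrm{det}\,A)^{1/(n-1)}$ (so $f^{n-1}=\mathrm{det}\,A$, hence $f^{n}/\mathrm{det}\,A=f$), $g:=\sqrt{\left|\mathrm{div}_{\Sigma}A\right|^{2}+\left|\langle A,\Rmnum{2}\rangle\right|^{2}}$, and let $L$ be the left-hand side of \eqref{eq:1.1}.

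First I would solve the linear Neumann problem $\mathrm{div}_{\Sigma}(A\nabla^{\Sigma}u)=\beta f-g$ on $\Sigma$ with conormal condition $\langle A\nabla^{\Sigma}u,\nu\rangle=\left|A(\nu)\right|$ on $\partial\Sigma$, where $\beta:=L/\int_{\Sigma}f$; this $\beta$ is exactly the solvability constant, since $\int_{\Sigma}(\beta f-g)=L-\int_{\Sigma}g=\int_{\partial\Sigma}\left|A(\nu)\right|=\int_{\partial\Sigma}\langle A\nabla^{\Sigma}u,\nu\rangle$. Uniform positive definiteness of $A$ makes this operator uniformly elliptic, so $u\in C^{\infty}(\Sigma)$ by Schauder theory up to the boundary. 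Then, exactly as in \cite{Bren}, for a geodesic ball $B_{r}\subset M$ (with $r$ eventually sent to infinity) and each $y\in B_{r}$ one examines a test function of the form $x\mapsto u(x)+\tfrac1{2r}d_{M}(x,y)^{2}$; the conormal condition $\langle A\nabla^{\Sigma}u,\nu\rangle=\left|A(\nu)\right|$ — here uniform positive definiteness of $A$ is used again — forces its minimum over $\Sigma$ to be attained at an interior point $\bar x$, at which the first-order condition identifies $y$ with $\exp_{\bar x}\!\big(r\nabla^{\Sigma}u(\bar x)+w\big)$ for a normal vector $w$ satisfying $|r\nabla^{\Sigma}u(\bar x)|^{2}+|w|^{2}=d_{M}(\bar x,y)^{2}$. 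Thus the map $\Phi(x,w):=\exp_{x}\!\big(r\nabla^{\Sigma}u(x)+w\big)$, restricted to the contact set $U^{*}$ of such pairs, covers $B_{r}$; and the second-order condition, together with the Hessian comparison $\nabla^{2}_{\Sigma}\big(\tfrac12 d_{M}(\cdot,y)^{2}\big)\le\mathrm{id}$ valid because $\mathrm{sec}_{M}\ge0$, gives $\mathrm{id}+r\nabla^{2}_{\Sigma}u(\bar x)-A_{w}\ge0$ on $U^{*}$, where $A_{w}$ is the shape operator in direction $w$.

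The core is the Jacobian estimate, which is the one genuinely new point. By the Jacobi-field comparison for $\mathrm{sec}_{M}\ge0$ the geodesics issuing from $\Sigma$ spread no faster than Euclidean ones, so $\left|\mathrm{Jac}\,\Phi(\bar x,w)\right|\le\mathrm{det}\big(\mathrm{id}+r\nabla^{2}_{\Sigma}u(\bar x)-A_{w}\big)$. Since $A>0$ and this matrix is positive semidefinite on $U^{*}$, the \emph{matrix} arithmetic--geometric mean inequality gives $\mathrm{det}\,A\cdot\mathrm{det}\big(\mathrm{id}+r\nabla^{2}_{\Sigma}u-A_{w}\big)\le\big(\tfrac1n\langle A,\,\mathrm{id}+r\nabla^{2}_{\Sigma}u-A_{w}\rangle\big)^{n}$. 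Substituting the equation in the form $\langle A,\nabla^{2}_{\Sigma}u\rangle=\mathrm{div}_{\Sigma}(A\nabla^{\Sigma}u)-\langle\mathrm{div}_{\Sigma}A,\nabla^{\Sigma}u\rangle=\beta f-g-\langle\mathrm{div}_{\Sigma}A,\nabla^{\Sigma}u\rangle$, using $\langle A,A_{w}\rangle=\langle\langle A,\Rmnum{2}\rangle,w\rangle$, and applying Cauchy--Schwarz with the constraint $|r\nabla^{\Sigma}u(\bar x)|^{2}+|w|^{2}=d_{M}(\bar x,y)^{2}$, the trace on the right becomes $\mathrm{tr}\,A+r\beta f-rg-\langle(\mathrm{div}_{\Sigma}A,\langle A,\Rmnum{2}\rangle),(r\nabla^{\Sigma}u(\bar x),w)\rangle$, and here $-rg-\langle(\mathrm{div}_{\Sigma}A,\langle A,\Rmnum{2}\rangle),(r\nabla^{\Sigma}u(\bar x),w)\rangle\le-rg+g\,d_{M}(\bar x,y)$, which is $O(1)$ in $r$. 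Hence, up to terms lower order in $r$, $\left|\mathrm{Jac}\,\Phi(\bar x,w)\right|\le n^{-n}(\mathrm{det}\,A)^{-1}(r\beta f)^{n}=n^{-n}(r\beta)^{n}f$ on $U^{*}$; it is exactly the inhomogeneity $-g$ built into the equation that cancels the two cross terms coming from $\mathrm{div}_{\Sigma}A$ and from $\Rmnum{2}$ and re-forms the radical $\sqrt{\left|\mathrm{div}_{\Sigma}A\right|^{2}+\left|\langle A,\Rmnum{2}\rangle\right|^{2}}$.

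Finally, by the area formula $\mathrm{Vol}(B_{r})\le\int_{U^{*}}\left|\mathrm{Jac}\,\Phi\right|$; integrating first over the normal fibres with the precise form of the Jacobian, exactly as in \cite{Bren}, produces the sharp codimension factor $\tfrac{(n+m)\left|B^{n+m}\right|}{m\left|B^{m}\right|}$, and dividing by $r^{n+m}$ and letting $r\to\infty$ replaces $\mathrm{Vol}(B_{r})/r^{n+m}$ by $\theta\left|B^{n+m}\right|$ and extinguishes the lower-order remainders, leaving $\theta\left|B^{n+m}\right|\le\tfrac{m\left|B^{m}\right|}{n+m}\cdot\tfrac{\beta^{n}}{n^{n}}\int_{\Sigma}f$. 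This rearranges to $\beta\ge n\big[\tfrac{(n+m)\left|B^{n+m}\right|}{m\left|B^{m}\right|}\big]^{1/n}\theta^{1/n}\big(\int_{\Sigma}f\big)^{-1/n}$, and since $\beta=L/\int_{\Sigma}f$ and $f=(\mathrm{det}\,A)^{1/(n-1)}$ this is precisely \eqref{eq:1.1}, whence the corollary follows as above. I expect the main obstacle to be this Jacobian step — choosing the inhomogeneity of the PDE and using the \emph{matrix} (not scalar) AM--GM so that the first-order contributions of $\mathrm{div}_{\Sigma}A$ and of the second fundamental form recombine into the single radical — together with the check that the conormal condition $\langle A\nabla^{\Sigma}u,\nu\rangle=\left|A(\nu)\right|$, rather than a plain Neumann condition, still keeps the minima of the test functions interior, which is once more where uniform positive definiteness of $A$ is essential; the hypothesis $\mathrm{sec}_{M}\ge0$ enters, as in \cite{Bren}, only through the Hessian comparison for $d_{M}(\cdot,y)^{2}$ and the Euclidean bound on the normal spreading of geodesics.
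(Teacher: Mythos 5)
Your proposal is correct and takes essentially the same route as the paper: the corollary is just Theorem~\ref{th:01} specialized to $m=2$ together with the identity $(n+2)\left|B^{n+2}\right|=2\left|B^{2}\right|\left|B^{n}\right|$, which is exactly how the paper obtains it. Your accompanying sketch of Theorem~\ref{th:01} also mirrors the paper's ABP argument (the conormal problem for $u$, the contact set and transport map $\Phi_{r}$, the matrix arithmetic--geometric mean bound on the Jacobian, and the $r\to\infty$ limit), with your constant $\beta$ playing the role of the paper's normalization \eqref{eq:2.1} and the annulus/$\sigma\to1$ step deferred to Brendle as in the paper.
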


    According to S. Brendle's paper in $\cite{Bren}$,
    we know that the product $M^{n+1}\times\mathbb{R}$ and $M^{n+1}$ have the same asymptotic volume ratio,
    then we can view $\Sigma^{n}$ as a submanifold
    of the $(n+2)-$dimensional manifold $M\times\mathbb{R}$.  
    Hence the inequality in corollary 1.2 also holds in the codimension 1 setting: 

    \begin{corollary}	
    	Let $M$ be a complete noncompact manifold of dimension $n+1$
    	with nonnegative sectional curvature,
    	Let $\Sigma$ be a compact $n$-dimensional submanifold of $M^{n+1}$ with smooth boundary $\partial\Sigma$ 
    	(possibly  $\partial\Sigma=\varnothing$).
    	If $A$ is a smooth symmetric uniformly positive definite $\left( 0,2\right)-$tensor field on $\Sigma$,
    	then
    	\begin{equation*}
    		\int_{\Sigma}\sqrt{\left| \mathrm{div}_{\Sigma}A\right|^{2}+\left| \left\langle A,\Rmnum{2}\right\rangle \right|^{2}}
    		+\int_{\partial\Sigma}\left| A\left( \nu\right) \right| \geq
    		n\left| B^{n}\right|^{\frac{1}{n}}\theta^{\frac{1}{n}}
    		\left( \int_{\Sigma}\left( \mathrm{det}A\right)^{\frac{1}{n-1}}\right) ^{\frac{n-1}{n}},
    	\end{equation*}
    	where $\nu$ is the unit outer normal vector field on $\partial\Sigma$ with respect to $\Sigma$.	
    \end{corollary}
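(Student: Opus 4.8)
The plan is to deduce this codimension-one statement from the codimension-two result of Corollary 1.2 by an isometric product construction, as indicated in the remark preceding the statement. First I would form the product manifold $\widetilde{M} := M\times\mathbb{R}$, which is a complete noncompact manifold of dimension $(n+1)+1 = n+2$. Since the curvature tensor of a Riemannian product is the direct sum of the curvature tensors of the factors, every sectional curvature of $\widetilde{M}$ is either a sectional curvature of $M$, a sectional curvature of $\mathbb{R}$ (which vanishes), or a ``mixed'' sectional curvature (which also vanishes); hence $\widetilde{M}$ has nonnegative sectional curvature.

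Next I would check that the asymptotic volume ratio is preserved, i.e.\ $\theta(\widetilde{M}) = \theta(M)$. Fixing $q\in M$ and slicing a geodesic ball of $\widetilde{M}$ centered at $(q,0)$ along the $\mathbb{R}$-factor gives
\[
\mathrm{Vol}\bigl(B^{\widetilde{M}}_r((q,0))\bigr) = \int_{-r}^{r}\mathrm{Vol}\bigl(B^{M}_{\sqrt{r^2-t^2}}(q)\bigr)\,dt .
\]
Letting $r\to\infty$, substituting $t = rs$, and using $\mathrm{Vol}(B^M_\rho(q))\sim\theta\,|B^{n+1}|\,\rho^{n+1}$ (valid by Bishop--Gromov, which also supplies a dominating bound) together with the elementary identity $|B^{n+1}|\int_{-1}^{1}(1-s^2)^{(n+1)/2}\,ds = |B^{n+2}|$, one obtains $\mathrm{Vol}(B^{\widetilde{M}}_r)\sim\theta\,|B^{n+2}|\,r^{n+2}$, so that indeed $\theta(\widetilde M) = \theta(M)$; this is exactly the observation of S. Brendle in \cite{Bren}.

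Then I would view $\Sigma$ as an $n$-dimensional submanifold of $\widetilde{M}$ through the totally geodesic inclusion $M\cong M\times\{0\}\hookrightarrow M\times\mathbb{R}$, so that now $\Sigma$ has codimension $m=2$ in $\widetilde M$. The induced metric on $\Sigma$ is literally unchanged, hence so are the intrinsic divergence $\mathrm{div}_\Sigma A$, the boundary integrand $|A(\nu)|$, and $\mathrm{det}\,A$. Because $M\times\{0\}$ is totally geodesic in $\widetilde{M}$, the normal bundle splits as $N_{\widetilde M}\Sigma = N_M\Sigma \oplus \mathbb{R}\partial_t$ and the second fundamental form of $\Sigma$ in $\widetilde{M}$ coincides with that of $\Sigma$ in $M$, with vanishing $\mathbb{R}\partial_t$-component; hence $|\langle A,\Rmnum{2}\rangle|$ is also unchanged. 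Applying Corollary 1.2 to $\Sigma\subset\widetilde{M}^{n+2}$ and inserting $\theta(\widetilde M) = \theta(M)$ then gives precisely the asserted inequality.

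The only genuinely substantive points are the invariance of the asymptotic volume ratio under taking a product with a line — which is the slicing computation above, due to Brendle — and the fact that the extra normal direction contributes nothing to $\langle A,\Rmnum{2}\rangle$; the latter is immediate from the totally geodesic embedding. I therefore expect no real obstacle beyond this bookkeeping, the substance of the argument having already been carried out in Theorem~\ref{th:01} and Corollary 1.2.
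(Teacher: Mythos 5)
Your proposal is correct and follows exactly the route the paper takes: the paper's justification for this corollary is precisely the remark that $\Sigma$ can be regarded as a codimension-two submanifold of $M\times\mathbb{R}$, whose asymptotic volume ratio equals that of $M$ (citing Brendle), after which Corollary 1.2 applies. Your write-up simply supplies the details the paper leaves implicit — nonnegative sectional curvature of the product, the slicing computation for $\theta(M\times\mathbb{R})=\theta(M)$, and the invariance of $\mathrm{div}_\Sigma A$, $\langle A,\Rmnum{2}\rangle$, $|A(\nu)|$ and $\det A$ under the totally geodesic inclusion — so there is no substantive difference.
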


    As applications, we can first get the following S. Brendle's Michael-Simon-Sobolev inequality in $\cite{Bren}$:
    
    \begin{corollary}	\label{Re:02}
    (Brendle)	
    Let $M$ be a complete noncompact manifold of dimension $n+m$
    with nonnegative sectional curvature.
    Let $\Sigma$ be a compact submanifold of $M$ of dimensional $n$
    (possibly with boundary $\partial\Sigma$),
    and let $f$ be a positive smooth function on $\Sigma$. 
    If $m\geq2$, then
    \begin{equation}  \label{eq:1.2}
    	\int_{\Sigma}\sqrt{
    	\left| \nabla^{\Sigma}f\right|^{2}
    	+f^{2}\left| H\right|^{2}}
    	+\int_{\partial\Sigma}f 
    	\geq n\left[ \frac{\left( n+m\right)\left| B^{n+m}\right|}{m\left| B^{m}\right|}\right] ^{\frac{1}{n}}\theta^{\frac{1}{n}}
    	\left( \int_{\Sigma}f^{\frac{n}{n-1}}\right) ^{\frac{n-1}{n}},
    \end{equation}
    where $\theta$ denotes the asymptotic volume ratio of $M$.
    \end{corollary}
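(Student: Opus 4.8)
The plan is to obtain Corollary~\ref{Re:02} as a direct specialization of Theorem~\ref{th:01}: given a positive smooth function $f$ on $\Sigma$, I would apply the theorem to the $(0,2)$-tensor field
\[
	A = f\,g,
\]
where $g$ denotes the Riemannian metric on $\Sigma$ induced from $M$. First I would verify that $A$ is admissible. It is smooth and symmetric because $f$ is smooth and $g$ is; and since $\Sigma$ is compact and $f>0$, we have $\min_{\Sigma}f>0$, so $A\ge(\min_{\Sigma}f)\,g$ is uniformly positive definite. Hence Theorem~\ref{th:01} applies to this $A$, and the task reduces to matching the terms.

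The next step is to rewrite each of the four quantities in \eqref{eq:1.1} under the choice $A=f\,g$. Fixing a local orthonormal frame $\{e_i\}_{i=1}^{n}$ of $T\Sigma$: the divergence is $(\mathrm{div}_\Sigma A)_j=\nabla^\Sigma_i(f\,\delta_{ij})=\nabla^\Sigma_j f$, so $\lvert\mathrm{div}_\Sigma A\rvert=\lvert\nabla^\Sigma f\rvert$; the contraction with the second fundamental form is $\langle A,\Rmnum{2}\rangle=f\sum_i\Rmnum{2}(e_i,e_i)=f\,H$, so $\lvert\langle A,\Rmnum{2}\rangle\rvert=f\,\lvert H\rvert$; along $\partial\Sigma$ one has $A(\nu)=f\,\nu^{\flat}$, hence $\lvert A(\nu)\rvert=f$; and the $g$-self-adjoint endomorphism associated with $A$ is $f\cdot\mathrm{Id}$, so $\det A=f^{n}$ and $(\det A)^{\frac{1}{n-1}}=f^{\frac{n}{n-1}}$. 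Substituting these four identities into \eqref{eq:1.1} turns its left-hand side into $\int_\Sigma\sqrt{\lvert\nabla^\Sigma f\rvert^2+f^2\lvert H\rvert^2}+\int_{\partial\Sigma}f$ and its right-hand side into $n\bigl[\tfrac{(n+m)\lvert B^{n+m}\rvert}{m\lvert B^{m}\rvert}\bigr]^{\frac{1}{n}}\theta^{\frac{1}{n}}\bigl(\int_\Sigma f^{\frac{n}{n-1}}\bigr)^{\frac{n-1}{n}}$, which is precisely \eqref{eq:1.2}.

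I expect no serious obstacle here: the entire analytic content is already contained in the proof of Theorem~\ref{th:01}, and what remains is bookkeeping. The one point deserving attention is the normalization convention for the mean curvature and for $\langle A,\Rmnum{2}\rangle$ used in Theorem~\ref{th:01}; with $H$ taken as the (unnormalized) trace $\sum_i\Rmnum{2}(e_i,e_i)$, in agreement with \cite{Bren}, the geometric constants coincide exactly and the corollary follows with no loss. Under any other convention one would merely carry the corresponding dimensional factor through the computation, but the stated form of \eqref{eq:1.2} is exactly the trace normalization.
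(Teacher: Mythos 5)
Your proposal is correct and follows essentially the same route as the paper: apply Theorem~\ref{th:01} with $A=f\,g_{\Sigma}$ and verify the identities $\mathrm{div}_{\Sigma}A=\nabla^{\Sigma}f$, $\langle A,\Rmnum{2}\rangle=fH$, $|A(\nu)|=f$, and $\det A=f^{n}$, which is exactly the paper's computation. Your added remarks on uniform positive definiteness via compactness and on the trace convention for $H$ are fine and do not change the argument.
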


    Based on Corollary 1.4 above, we get the same isoperimetric inequality on minimal submanifolds (see \cite{Bren}).
    Moreover, we obtain another application concerning the nonexistence of the closed minimal submanifold,
    similar to the work of C. Y. Yi and Y. Zheng (see \cite{YI}).

    \begin{corollary}	\label{Re:03}
    (Yi-Zheng)	
    If $\left(M, g\right) $ is a complete noncompact Riemannian manifold of dimension $n+m \left(m\geq 2\right)$
   	with nonnegative sectional curvature and Euclidean volume growth $\left( \theta>0\right)$, 
   	then there doesn't exist any closed minimal submanifold in $M$.
    \end{corollary}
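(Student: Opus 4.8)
The plan is to derive a contradiction by evaluating the Michael--Simon--Sobolev inequality of Corollary~\ref{Re:02} on the constant function $f\equiv 1$. Suppose, for contradiction, that $\Sigma$ is a closed minimal submanifold of $M$ of some dimension $d$ (with $d\ge 2$; the case $d=1$ is addressed in the last paragraph); being a compact submanifold, it has $\partial\Sigma=\varnothing$ and $0<|\Sigma|<\infty$. Let $c=(n+m)-d$ denote the codimension of $\Sigma$ in $M$. If $c\ge 2$, then $M$ and $\Sigma$ satisfy the hypotheses of Corollary~\ref{Re:02} verbatim (with the ``$n$'' and ``$m$'' there taken to be $d$ and $c$). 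If $c=1$, I would first pass to the product $M\times\mathbb{R}$: it again has nonnegative sectional curvature and the same asymptotic volume ratio $\theta$ (as recalled in the paragraph preceding Corollary~1.3), and $\Sigma\cong\Sigma\times\{0\}$ sits inside it as a minimal submanifold of codimension $2$, minimality being inherited because $M\times\{0\}$ is totally geodesic in $M\times\mathbb{R}$. In either case we reduce to the setting of Corollary~\ref{Re:02}.

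Next I would substitute $f\equiv 1$ into \eqref{eq:1.2}. Since $f$ is constant we have $\nabla^{\Sigma}f\equiv 0$; since $\Sigma$ is minimal, $H\equiv 0$; and since $\partial\Sigma=\varnothing$, the boundary integral vanishes. Hence the entire left-hand side of \eqref{eq:1.2} equals $0$. Its right-hand side, on the other hand, is
\[
d\left[\frac{(d+c)\,|B^{d+c}|}{c\,|B^{c}|}\right]^{\frac{1}{d}}\theta^{\frac{1}{d}}\,|\Sigma|^{\frac{d-1}{d}},
\]
which is strictly positive because $\theta>0$ by the Euclidean volume growth assumption and $|\Sigma|>0$. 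Thus $0$ would be bounded below by a strictly positive quantity --- a contradiction. Therefore $M$ admits no closed minimal submanifold.

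Since Corollary~\ref{Re:02} is already in hand, this argument carries no genuine analytic difficulty, and I do not expect a real obstacle; the main thing to get right is pinpointing where each hypothesis enters. Nonnegative sectional curvature is used exactly to license Corollary~\ref{Re:02}, and $\theta>0$ is precisely what renders its right-hand side nonzero --- dropping it makes the inequality vacuous, as the flat example $\mathbb{R}^{n+m-2}\times T^{2}$ (for $n+m\ge 3$) shows, which has $\theta=0$ and contains the closed minimal, totally geodesic submanifold $\{0\}\times T^{2}$. The remaining points deserving care are routine: the facts about $M\times\mathbb{R}$ invoked in the codimension-one reduction (preservation of the curvature sign, of $\theta$, and of minimality), and the observation that the exponent $\tfrac{d}{d-1}$ in \eqref{eq:1.2} requires $d\ge 2$, so that one-dimensional closed minimal submanifolds (closed geodesics) lie outside the reach of this Sobolev-inequality argument and would have to be handled by a separate consideration.
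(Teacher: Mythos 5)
Your proposal is correct and is essentially the paper's own argument: the paper takes $A=g_{\Sigma}$ in Theorem 1.1 (equivalently $f\equiv 1$ in Corollary \ref{Re:02}), so the left-hand side vanishes by minimality and $\partial\Sigma=\varnothing$ while the right-hand side equals $n\bigl[\tfrac{(n+m)|B^{n+m}|}{m|B^{m}|}\bigr]^{\frac{1}{n}}\theta^{\frac{1}{n}}|\Sigma|^{\frac{n-1}{n}}>0$ because $\theta>0$, yielding the same contradiction. Your extra care about the codimension (the $M\times\mathbb{R}$ reduction for codimension one) and the caveat that one-dimensional closed minimal submanifolds (closed geodesics) are not reached by this argument are refinements the paper passes over silently, but they do not change the route.
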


    This paper is organized as follows. 
    In section 2, 
    we introduce fundamental concepts and establish a generalized trace inequality for the product of two square matrices.
    In section 3, we provide the proof of Theorem 1.1.
    In section 4, we present the proofs of Corollary 1.4 and Corollary 1.5.
    
	\section{Preliminaries}
    Let $(M, \bar{g})$ be a complete noncompact $(n+m)-$dimension manifold with nonnegative Ricci curvature.
    Let $\Sigma$ be a compact $n$-dimensional submanifold of $M^{n+m}$ with smooth boundary $\partial\Sigma$ (possibly  $\partial\Sigma=\varnothing$).
    Denote by $\bar{D}$ the Levi-Civita connection on $(M, \bar{g})$,
    and by $\bar{R}$ the Riemann curvature tensor of $(M, \bar{g})$.
    We assume that $g_{\Sigma}$ is the induced Riemannian metric on $\Sigma$,
    $D_{\Sigma}$ is the Levi-Civita connection on $\Sigma$, 
    and $\nabla^{\Sigma}$ is the gradient of $\Sigma$.
    Let $A$ be a smooth symmetric uniformly positive definite $\left( 0,2\right)-$tensor field on $\Sigma$.
    For each point $x\in\Sigma$, we denote by $T_{x}\Sigma$ and $T_{x}^{\bot} \Sigma$ the tangent and normal space to $\Sigma$ at $x$, respectively.
    Let $\left( x^{1}, \dots, x^{n}\right) $ be a local coordinate system on $\Sigma$,
    the divergence of $A$ on $\Sigma$ is defined by
    \begin{equation*}
    	\mathrm{div}_{\Sigma}A:=
    	g_{\Sigma}^{ki}D^{\Sigma}_{k}A_{ij}dx^{j}.
    \end{equation*}
    Let $T$ and $S$ be two $\left(0,2\right)$-tensor fields on $\Sigma$. In general, the inner product of $T$ and $S$ can be written as
    \begin{equation*}
    	\left\langle T, S\right\rangle 
    	=g_{\Sigma}^{ik}g_{\Sigma}^{jl}T_{ij}S_{kl}
    	=T_{ij}S^{ij}.
    \end{equation*}
    The composition of $T$ and $S$ is the $\left(0,2\right)$-tensor $T\circ S$ defined by 
    \begin{equation*}
    (T\circ S)_{ij}=g^{kl}_{\Sigma}T_{ik}S_{lj}.
    \end{equation*}
    The $\mathrm{det}T$ signifies the determinant of $T$, 
    which is defined by the determinant of $\left(1,1\right)$-tensor $g_{\Sigma}^{ik}T_{jk}\frac{\partial}{\partial x^{i}}\otimes dx^{j}$. 
    When $T\circ S=g_{\Sigma}$, 
    we refer to $T$ as the inverse tensor of $S$ denoted by $T^{-1}$.
    Meanwhile, 
    $\Rmnum{2}$ denotes the second fundamental form of $\Sigma$ as defined by
     \begin{equation*}
     	\left\langle \Rmnum{2}\left( X,Y\right), Z\right\rangle :=
     	\left\langle \bar{D}_{X}Y, Z\right\rangle =
     	-\left\langle \bar{D}_{X}Z, Y\right\rangle,
     \end{equation*}
    where $X$ and $Y$ are tangent vector fields on $\Sigma$,
    $Z$ is a normal vector field to $\Sigma$. 
    Further, $\left\langle A, \Rmnum{2}\right\rangle\left( x\right) $ is the normal vector at $x\in\Sigma$ defined by
    \begin{equation*}
    	\left\langle A, \Rmnum{2}\right\rangle \left( x\right) =
         g_{\Sigma}^{ik}g_{\Sigma}^{jl}A_{ij}\Rmnum{2}(\frac{\partial}{\partial x ^{k}},\frac{\partial}{\partial x ^{l}}).
    \end{equation*}
    At last, we list the following lemma which extends the arithmetic-geometric mean inequality to the product of two square matrices version:
    \begin{lemma}(Lemma A.1 in \cite{DP})	
   	For $n\in\mathbb{N}$, let $A$ and $B$ be square symmetric matrices of size $n$.
   	Assume that $A$ is positive definite and $B$ is non-negative definite. Then
    	\begin{equation*}
    		\mathrm{det}AB
    		\leq\left(\frac{\mathrm{tr}\left( AB\right)}{n} \right)^{n}.
    	\end{equation*}
    	The equality holds if and only if $AB=\lambda I_{n}$ for some $\lambda\geq0$, 
    	where $I_{n}$ is the identity matrix.
    \end{lemma}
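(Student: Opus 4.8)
The plan is to reduce this matrix statement to the classical scalar arithmetic--geometric mean inequality by diagonalizing the product $AB$. Since $A$ is symmetric positive definite, it has a symmetric positive definite square root $A^{1/2}$, and $AB$ is similar to $A^{-1/2}(AB)A^{1/2}=A^{1/2}BA^{1/2}$. Because $B$ is symmetric non-negative definite, it too has a symmetric non-negative definite square root $B^{1/2}$, so $A^{1/2}BA^{1/2}=\bigl(A^{1/2}B^{1/2}\bigr)\bigl(A^{1/2}B^{1/2}\bigr)^{T}$ is symmetric and non-negative definite. Consequently $AB$ has non-negative real eigenvalues $\lambda_{1},\dots,\lambda_{n}\ge 0$, and since trace and determinant are similarity invariants, $\mathrm{tr}(AB)=\sum_{i}\lambda_{i}$ and $\mathrm{det}(AB)=\prod_{i}\lambda_{i}$.

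Next I would simply apply the scalar AM--GM inequality to the non-negative numbers $\lambda_{1},\dots,\lambda_{n}$, obtaining $\prod_{i}\lambda_{i}\le\bigl(\tfrac1n\sum_{i}\lambda_{i}\bigr)^{n}$, which is precisely $\mathrm{det}(AB)\le\bigl(\mathrm{tr}(AB)/n\bigr)^{n}$.

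For the equality discussion, recall that scalar AM--GM is an equality exactly when $\lambda_{1}=\cdots=\lambda_{n}=:\lambda$, and then $n\lambda=\mathrm{tr}(AB)\ge 0$ forces $\lambda\ge 0$. Since $A^{1/2}BA^{1/2}$ is symmetric with all eigenvalues equal to $\lambda$, it must equal $\lambda I_{n}$; conjugating back by $A^{-1/2}$ yields $B=\lambda A^{-1}$, hence $AB=\lambda I_{n}$. Conversely, if $AB=\lambda I_{n}$ for some $\lambda\ge 0$, then $\mathrm{det}(AB)=\lambda^{n}=\bigl(\mathrm{tr}(AB)/n\bigr)^{n}$, so equality holds.

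The one point needing genuine care --- and the main obstacle --- is the spectral claim: $AB$ is not symmetric in general, so a priori its eigenvalues need not be real or non-negative. The remedy is to conjugate by $A^{1/2}$ rather than by $A$, producing the genuinely symmetric non-negative matrix $A^{1/2}BA^{1/2}$; this device also keeps the equality analysis valid when $B$ is only non-negative definite, the degenerate case $\mathrm{det}(AB)=0$ corresponding to $\lambda=0$ and hence $B=0$.
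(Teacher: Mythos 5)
Your proof is correct: conjugating to the symmetric non-negative definite matrix $A^{1/2}BA^{1/2}$ legitimizes the spectral claim about $AB$, and the AM--GM step together with the equality analysis (all eigenvalues equal, hence $A^{1/2}BA^{1/2}=\lambda I_{n}$, hence $AB=\lambda I_{n}$) is complete. The paper itself offers no proof, quoting the statement as Lemma A.1 of \cite{DP}, and the argument there is essentially this same symmetrization-plus-AM--GM reduction, so your route coincides with the intended one.
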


	\section{Proof of Theorem 1.1}

    First, we prove Theorem 1.1 in the special case that $\Sigma$ is connected.   
    Due to the scaling invariant property of $\eqref{eq:1.1}$,
    it suffices to establish $\eqref{eq:1.1}$ under the following condition:
    \begin{equation} \label{eq:2.1}
    	\int_{\Sigma}\sqrt{\left| \mathrm{div}_{\Sigma}A\right|^{2}+\left| \left\langle A,\Rmnum{2}\right\rangle \right|^{2}}
    	+\int_{\partial\Sigma}\left| A\left( \nu\right) \right| =
    	n \int_{\Sigma}\left( \mathrm{det}A\right) ^{\frac{1}{n-1}}.
    \end{equation}
    Substituting $\eqref{eq:2.1}$ into $\eqref{eq:1.1}$, we obtain
    \begin{equation*} 
    	n \int_{\Sigma}\left( \mathrm{det}A\right) ^{\frac{1}{n-1}}
    	\geq n\left[ \frac{\left( n+m\right)\left| B^{n+m}\right|}{m\left| B^{m}\right|}\right] ^{\frac{1}{n}}\theta^{\frac{1}{n}}\left( \int_{\Sigma}\left( \mathrm{det}A\right)^{\frac{1}{n-1}}\right) ^{\frac{n-1}{n}},
    \end{equation*}
     Divide both sides of the inequality by $\left( \int_{\Sigma}\left(  \mathrm{det}A\right)^{\frac{1}{n-1}}\right) ^{\frac{n-1}{n}}$ simultaneously, we have
     \begin{equation} \label{eq:2.2}
     	\left( \int_{\Sigma}\left( \mathrm{det}A\right)^{\frac{1}{n-1}}\right) ^{\frac{1}{n}}
     	\geq \left[ \frac{\left( n+m\right)\left| B^{n+m}\right|}{m\left| B^{m}\right|}\right] ^{\frac{1}{n}}\theta^{\frac{1}{n}}.
     \end{equation}
    In other words, we are left to prove $\eqref{eq:2.2}$ 
    instead of $\eqref{eq:1.1}$ in the following proof 
    under the condition $\eqref{eq:2.1}$.
  
    Given that $\Sigma$ is connected and $A$ satisfies $\eqref{eq:2.1}$, 
    we can find a classical solution $u:\Sigma\rightarrow\mathbb{R}$ to the following equation using the existence theorem of solutions for elliptic equations
    $\left( \mathrm{see} \cite{MT}\right)$
   	\begin{equation}  \label{eq:2.3}
    	\left\{\begin{aligned}
    		&\mathrm{div}_{\Sigma}\left( A\left( \nabla^{\Sigma}u\right) \right) \left( x\right) 
    		=n\left( \mathrm{det}A\left( x\right) \right) ^{\frac{1}{n-1}}
    		-\sqrt{\left| \mathrm{div}_{\Sigma}A\right|^{2}\left( x\right) + \left| \left\langle A,\Rmnum{2}\right\rangle \right|^{2}\left( x\right) },\ \mathrm{in}\  \Sigma\backslash\partial\Sigma,  \\
    		&\left\langle A\left( \nabla^{\Sigma}u\right) \left( x\right),\nu\left( x\right) \right\rangle =\left| A\left( \nu\left( x\right)\right) \right|,\ \mathrm{on}\  \partial\Sigma.
    	\end{aligned} \right.
    \end{equation}
     We define
    \begin{equation*}
    	\begin{split}
        &\Omega:=\left\lbrace x\in\Sigma\backslash\partial\Sigma:
        \left| \nabla^{\Sigma}u\left( x\right) \right|<1\right\rbrace,\\  
    	&U:=\{ \left( x,y\right): x\in\Sigma\backslash\partial\Sigma,
    	y\in T_{x}^{\bot}\Sigma,
    	\left| \nabla^{\Sigma}u\left( x\right) \right|^{2}+\left| y\right| ^{2}<1\}.\\  
        \end{split}
        \end{equation*} 
        In the following,
        we can fix a positive number $r$ and define a contact set
        \begin{equation*}
       	V_{r}:=\left\lbrace \left( \bar{x},\bar{y}\right)\in U: 
        ru\left( x\right) +\frac{1}{2}d\left( x,\mathrm{exp}_{\bar{x}}\left( rD_{\Sigma}u\left( \bar{x}\right)+r\bar{y}\right) \right) ^{2}
        \geq ru\left(\bar{x}\right) +\frac{1}{2}r^{2} \left( \left| D_{\Sigma}u\left( \bar{x}\right)\right| ^{2}+\left| \bar{y}\right| ^{2} \right)\right\rbrace
        \end{equation*} 
    for all  $x\in\Sigma$. 
    We denote a transport map $\Phi_{r}:U\rightarrow M$ by
    \begin{equation*}
    	\Phi_{r}\left( x, y\right) =\mathrm{exp}_{x}\left( r\nabla^{\Sigma}u\left( x\right)+ry\right) 
    \end{equation*} 
    for all $\left( x, y\right)\in U$.
    Standard elliptic regularity theory implies that the function $u\in C^{2, \gamma}(\Sigma)$ and $\Phi_{r}$ is of class  $C^{1, \gamma}$ 
    for each $0<\gamma<1 \left( \mathrm{see} \cite{DG}\right)$.

   \begin{lemma}
	The set
	\begin{equation*}
		\left\lbrace p\in M: d(x, p)<r \text{ for all } x\in\Sigma\right\rbrace
	\end{equation*} 
    is contained in $\Phi_{r}\left( V_{r}\right)$.
    \end{lemma}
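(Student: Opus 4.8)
\emph{Proof idea.} The plan is to run the usual Alexandrov--Bakelman--Pucci selection. Given a point $p$ in the set on the left, I would exhibit a preimage of $p$ under $\Phi_{r}$ lying in $V_{r}$. Consider
\begin{equation*}
\Psi(x):=r\,u(x)+\tfrac12\,d(x,p)^{2},\qquad x\in\Sigma ;
\end{equation*}
since $\Sigma$ is compact, $\Psi$ attains its minimum at some $\bar x\in\Sigma$. As $M$ is complete, fix a minimizing geodesic from $\bar x$ to $p$ and let $w\in T_{\bar x}M$ be its initial velocity, so $\exp_{\bar x}(w)=p$ and $|w|=d(\bar x,p)$; the hypothesis $d(x,p)<r$ for every $x\in\Sigma$ gives $|w|<r$. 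Since $\tfrac12 d(\cdot,p)^{2}$ need not be differentiable at $\bar x$, I would replace it near $\bar x$ by the smooth upper support function $x\mapsto\tfrac12\big(d(x,q_{\varepsilon})+d(\bar x,p)-\varepsilon\big)^{2}$, where $q_{\varepsilon}$ sits at distance $\varepsilon$ from $\bar x$ along the chosen geodesic; this is a standard device (cf.\ \cite{Bre,Bren}): it dominates $\tfrac12 d(\cdot,p)^{2}$ with equality at $\bar x$, hence also has a local minimum over $\Sigma$ at $\bar x$, and its $M$-gradient at $\bar x$ equals $-w$ for all small $\varepsilon$. One may therefore proceed as if $\Psi$ were smooth near $\bar x$ with $\nabla_{M}\big(\tfrac12 d(\cdot,p)^{2}\big)(\bar x)=-w$, so that $\nabla^{\Sigma}\Psi(\bar x)=r\nabla^{\Sigma}u(\bar x)-w^{\top}$, where $w=w^{\top}+w^{\bot}$ with $w^{\top}\in T_{\bar x}\Sigma$ and $w^{\bot}\in T_{\bar x}^{\bot}\Sigma$.

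The crux is to show $\bar x\notin\partial\Sigma$. Suppose not. The first-order condition for a minimum of a (smooth, after the above reduction) function over the manifold-with-boundary $\Sigma$ attained at a boundary point forces $\nabla^{\Sigma}\Psi(\bar x)=-c\,\nu$ for some $c\ge 0$, where $\nu=\nu(\bar x)$ is the outer unit normal (the tangential gradient is normal to $\partial\Sigma$ inside $\Sigma$ and has nonpositive inner product with $\nu$). Hence $w^{\top}=r\nabla^{\Sigma}u(\bar x)+c\,\nu$. Pairing with the tangent vector $A(\nu)$, using the symmetry of $A$, the Neumann condition $\langle A(\nabla^{\Sigma}u(\bar x)),\nu\rangle=|A(\nu)|$ from \eqref{eq:2.3}, and $\langle A(\nu),\nu\rangle>0$ (uniform positivity of $A$), I obtain
\begin{equation*}
\langle w^{\top},A(\nu)\rangle \;=\; r\,\langle A(\nabla^{\Sigma}u(\bar x)),\nu\rangle+c\,\langle A(\nu),\nu\rangle \;=\; r\,|A(\nu)|+c\,\langle A(\nu),\nu\rangle \;\ge\; r\,|A(\nu)| .
\end{equation*}
On the other hand $w^{\bot}$ is orthogonal to $A(\nu)\in T_{\bar x}\Sigma$, so $\langle w^{\top},A(\nu)\rangle=\langle w,A(\nu)\rangle\le|w|\,|A(\nu)|=d(\bar x,p)\,|A(\nu)|<r\,|A(\nu)|$ by Cauchy--Schwarz and $|w|<r$; since $|A(\nu)|>0$ this contradicts the previous display. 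Therefore $\bar x$ is an interior point.

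It remains to read off the preimage. At the interior minimizer one has $\nabla^{\Sigma}\Psi(\bar x)=0$, i.e.\ $w^{\top}=r\nabla^{\Sigma}u(\bar x)$; put $\bar y:=\tfrac1r\,w^{\bot}\in T_{\bar x}^{\bot}\Sigma$. Then $r\nabla^{\Sigma}u(\bar x)+r\bar y=w$, hence $\Phi_{r}(\bar x,\bar y)=\exp_{\bar x}(w)=p$, and
\begin{equation*}
r^{2}\big(|\nabla^{\Sigma}u(\bar x)|^{2}+|\bar y|^{2}\big)=|w^{\top}|^{2}+|w^{\bot}|^{2}=|w|^{2}=d(\bar x,p)^{2}<r^{2},
\end{equation*}
so $(\bar x,\bar y)\in U$. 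Finally $(\bar x,\bar y)\in V_{r}$: since $\exp_{\bar x}\!\big(r\nabla^{\Sigma}u(\bar x)+r\bar y\big)=p$ and $\tfrac12 r^{2}\big(|\nabla^{\Sigma}u(\bar x)|^{2}+|\bar y|^{2}\big)=\tfrac12 d(\bar x,p)^{2}$, the inequality defining $V_{r}$ at $(\bar x,\bar y)$ is exactly $\Psi(x)\ge\Psi(\bar x)$ for every $x\in\Sigma$, which holds by the choice of $\bar x$. Thus $p\in\Phi_{r}(V_{r})$. I expect the only genuinely delicate point to be the exclusion of the boundary case, where the uniform positivity of $A$ is used through $\langle A(\nu),\nu\rangle>0$ and $|A(\nu)|>0$; the nonnegativity of the curvature of $M$ is not needed for this lemma.
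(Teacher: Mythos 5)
Your proposal is correct and follows essentially the same route as the paper: minimize $ru(x)+\tfrac12 d(x,p)^{2}$ over $\Sigma$, rule out a boundary minimum using the Neumann condition $\langle A(\nabla^{\Sigma}u),\nu\rangle=|A(\nu)|$ together with the positivity of $A$ and $d(\bar x,p)<r$, then identify $\bar y$ from the first-order condition at the interior minimizer and check $(\bar x,\bar y)\in U\cap V_{r}$ with $\Phi_{r}(\bar x,\bar y)=p$. The only differences are presentational: you state the boundary exclusion via the first-order (KKT) condition paired with $A(\nu)$ rather than the paper's directional derivative along $-A(\nu)/|A(\nu)|$, and you handle non-smoothness of the distance with the standard upper support function where the paper argues case-by-case at cut points.
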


    \begin{proof}
    The set is non-empty when r is large enough.
    Fix a point $p\in M$ with the property that $d(x, p)<r$ for all $x\in\Sigma$.	
    Since $\left\langle A\left( \nabla^{\Sigma}u\right),\nu \right\rangle 
    =\left\langle \nabla^{\Sigma}u, A\left(\nu\right) \right\rangle
    =\left| A\left( \nu\right) \right|$, 	
    it is easy to show that $\left\langle \nabla^{\Sigma}u,\frac{ A\left( \nu\right)}{\left| A\left( \nu\right)\right|}\right\rangle=1$.
    Let $\bar{\nu}=\frac{ A\left( \nu\right)}{\left| A\left( \nu\right)\right|}$,
    we get $\left\langle \nabla^{\Sigma}u,\bar{\nu}\right\rangle=1$.
    For every $x_{0}\in\partial\Sigma$,                                                     there exists a smooth curve $C:\tilde{\gamma}=\tilde{\gamma}\left( t\right)$, $t\in \left[0, \varepsilon \right)$ 
    satisfying
    $ \tilde{\gamma}\left( 0\right)=x_{0}$ when $t=0$,  
    $ \tilde{\gamma}^{'}\left( 0\right)=-\bar{\nu}\left( x_{0}\right)$.
    Since $\left\langle\nu, \bar{\nu} \right\rangle
    =\left\langle\nu, \frac{ A\left( \nu\right)}{\left| A\left( \nu\right)\right|}\right\rangle >0$,
    we can find that $-\nu$ points to the interior of $\partial\Sigma$,
    this implies that the curve $C$ moves toward the interior of $\Sigma$ from $x_{0}\in\partial\Sigma$.
    Let $f\left( x\right)=ru\left( x\right)+\frac{1}{2}d^{2}\left(x, p \right)$. 
    Additionally, we need the following proposition:
    
    \begin{proposition}
    	The function $f\left( x\right)$ attains its minimum in the interior of $\Sigma$.
    \end{proposition}
    \begin{proof}
    If $x_{0}\in\partial\Sigma$ is a smooth point of the distance function $d\left( x, p\right)$.
    Along the curve $C$, we observe that:
   	\begin{equation*}
    	\begin{split}
    		\dfrac{d}{dt}\left[ ru\left( x\right)+\frac{1}{2}d^{2}\left(x, p \right)\right] \bigg|_{t=0}
    		&=\left[r \left\langle \nabla^{\Sigma}u, \tilde{\gamma}^{'}\left( t\right) \right\rangle +d\left(x, p \right)\left\langle \nabla d, \tilde{\gamma}^{'}\left(  t\right) \right\rangle \right] \bigg|_{t=0} \\
    		&=r \left\langle \nabla^{\Sigma}u, -\bar{\nu} \right\rangle +d\left(x, p \right)\left\langle \nabla d,-\bar{\nu} \right\rangle \\
    		&=-r-d\left(x, p \right)\left\langle \nabla d, \bar{\nu} \right\rangle \\
    		&\leq-r+d\left(x, p \right)\\
    		&<0.
    	\end{split}
    \end{equation*}
    The penultimate inequality holds due to the fact that $-\left\langle \nabla d, \bar{\nu} \right\rangle\leq\left| \nabla d\right| \left|\bar{\nu}  \right|$. 
    Consequently, $f^{'}\left( x\right)<0$, 
    implying that $f\left( x\right)$ attains its minimum in the interior of $\Sigma$.
    If $x_{0}\in\partial\Sigma$ is a cut point of $p$,
    there exists a minimal geodesic $\bar{\gamma}$ 
    connecting the two point $x_{0}$ and $p$,  
    advanced the point $p$ to a new point $p_{0}$ along the geodesic $\bar{\gamma}$, 
    it follows that the point $p_{0}$ to the point $x_{0}$ must be the unique minimal geodesic.
    We conclude that
    \begin{equation*}
    	\begin{split}
    		\dfrac{d}{dt}\left[ ru\left( x\right)+\frac{1}{2}d^{2}\left(x, p \right)\right] \bigg|_{t=0}
    		&=\dfrac{d}{dt}\left[ ru\left( x\right)+\frac{1}{2}\left( d\left(x, p_{0} \right)+ d\left( p_{0}, p\right)\right) ^{2} \right] \bigg|_{t=0}\\
    		&=\left[r \left\langle \nabla^{\Sigma}u, \bar{\gamma}^{'}\left( t\right) \right\rangle +
    		\left( d\left(x, p_{0} \right)+ d\left( p_{0}, p\right)\right) 
    	    \nabla d\left( x, p_{0}\right) \right] \bigg|_{t=0} \\
    		&=r\left\langle \nabla^{\Sigma}u, -\bar{\nu} \right\rangle 
    		+d\left(x, p \right)\left\langle \nabla d,-\bar{\nu} \right\rangle \\
    		&=-r-d\left(x, p \right)\left\langle \nabla d, \bar{\nu} \right\rangle \\
    		&\leq-r+d\left(x, p \right)\\
    		&<0.
    	\end{split}
    \end{equation*}
    Thus $f^{'}\left( x\right)<0$, indicating that
    minimal point of $f\left( x\right)$ lies in $\Sigma\backslash \partial\Sigma$.
    \end{proof}
    Based on the above Proposition 3.2,
    we can choose $\bar{x}\in \Sigma\backslash \partial\Sigma$ as the point where the function $f\left( x\right)=ru\left( x\right)+\frac{1}{2}d^{2}\left(x, p \right)$ reaches its minimum.
    Let $\bar{\gamma}:\left[ 0, 1\right]\rightarrow M$ be a minimizing geodesic such that $\bar{\gamma}\left(  0\right)=x_{0}$ and $\bar{\gamma}\left( 1\right)=p$.
    For every path $\gamma:\left[ 0, 1\right]\rightarrow M$ satisfying 
    $\gamma\left( 0\right)\in \Sigma$ and
    $\gamma\left( 1\right)=p$,
    we obtain
    \begin{equation*}
    	\begin{split}
    		ru\left( \gamma\left( 0\right)\right)
    		+\frac{1}{2}\int_{0}^{1}\left| \gamma^{'}\left( t\right) \right|^{2}dt
    		&\geq ru\left( \gamma\left( 0\right)\right)
    		+\frac{1}{2}d\left( \gamma\left( 0\right), p\right)^{2} \\
    		&\geq ru\left( \bar{x}\right)
    		+\frac{1}{2}d\left( \bar{x}, p\right)^{2} \\
    		&=ru\left( \bar{\gamma}\left( 0\right)\right)
    		+\frac{1}{2}\left| \bar{\gamma}^{'}\left( 0\right) \right|^{2}\\
    		&=ru\left(\bar{\gamma}\left( 0\right)\right)
    		+\frac{1}{2}\int_{0}^{1}\left|\bar{\gamma}^{'}\left( t\right) \right|^{2}dt. \\
    	\end{split}
    \end{equation*}
    In other words, the path $\bar{\gamma}$
    minimizes the functional $ru\left( \gamma\left( 0\right)\right)
    +\frac{1}{2}\int_{0}^{1}\left| \gamma^{'}\left( t\right) \right|^{2}dt$ among all paths $\gamma:\left[ 0, 1\right]\rightarrow M$ satisfying 
    $\gamma\left( 0\right)\in \Sigma$ and
    $\gamma\left( 1\right)=p$.
    Hence, the formula for the first variation of energy
    implies
	\begin{equation*}
	  \bar{\gamma}\left( 0\right)+r\nabla^{\Sigma}u\left(\bar{x}\right)
	  \in T_{x}^{\bot}\Sigma.
	\end{equation*}
    Consequently, we can find a vector $\bar{y}\in T_{x}^{\bot}\Sigma$ 
    such that
	\begin{equation*}
		\bar{\gamma}\left( 0\right)=r\nabla^{\Sigma}u\left(\bar{x}\right)
		+r\bar{y}.
	\end{equation*} 
	This implies
	\begin{equation*}
		r^{2}\left( \left| \nabla^{\Sigma}u\left(\bar{x}\right)\right| ^{2}+\left| \bar{y}\right| ^{2}\right) 
		=\left|\bar{\gamma}\left( 0\right) \right|^{2}
		=d\left( \bar{x}, p\right)^{2}
		<r^{2}.
	\end{equation*}
    Therefore, $\left| \nabla^{\Sigma}u\left(\bar{x}\right)\right| ^{2}+\left| \bar{y}\right| ^{2}<1$. 
    In other words, $\left(\bar{x}, \bar{y}\right)\in U$.
    Moreover,
    \begin{equation*}
    	\Phi_{r}\left( \bar{x}, \bar{y}\right)=\mathrm{exp}_{\bar{x}}\left( rD_{\Sigma}u\left( \bar{x}\right)+r\bar{y}\right)
    	=\mathrm{exp}_{\bar{\gamma}\left( 0\right)}\bar{\gamma}\left( 0\right)
    	=\bar{\gamma}\left( 1\right)=p.
    \end{equation*}
    At last, for each point $x\in \Sigma$, we deduce
    \begin{equation*}
    	\begin{split}
    	ru\left( x\right) +\frac{1}{2}d\left( x,\mathrm{exp}_{\bar{x}}\left( rD_{\Sigma}u\left( \bar{x}\right)+r\bar{y}\right) \right) ^{2}
    	&=ru\left( x\right)+\frac{1}{2}d\left( x, p\right)^{2}\\
    	&\geq ru\left(\bar{x}\right)+\frac{1}{2}d\left(\bar{x}, p\right)^{2}\\
    	&=ru\left( \bar{\gamma}\left( 0\right)\right)
    	+\frac{1}{2}\left| \bar{\gamma}^{'}\left( 0\right) \right|^{2}\\
 	    &\geq ru\left(\bar{x}\right) +\frac{1}{2}r^{2} \left( \left| D^{\Sigma}u\left( \bar{x}\right)\right| ^{2}+\left| \bar{y}\right| ^{2} \right).
 	    \end{split}
    \end{equation*}  
	Thus $\left(\bar{x}, \bar{y}\right)\in\Phi_{r}\left( V_{r}\right)$.
	This completes the proof of Lemma 3.1.
    \end{proof} 

    Using Lemma 3.1, similar to Brendle's paper,
    we can get the following lemma replacing the ball with the annulus.

    \begin{lemma}
    (\cite{Bren}, Lemma 3.5)	
    For each $0<\sigma<1$, the set
    \begin{equation*}
    	\left\lbrace p\in M: \sigma r<d(x, p)<r \text{ for all } x\in\Sigma\right\rbrace
    \end{equation*}	
    is contained in the set
    $\Phi_{r}\left( \left\lbrace 
    \left(x, y\right)\in V_{r}: \left|\nabla^{\Sigma}u \left( x\right) \right|^{2}+\left| y\right|^{2}>\sigma^{2}\right\rbrace\right)$.
    \end{lemma}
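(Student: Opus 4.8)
The plan is to deduce Lemma 3.4 from Lemma 3.1 by a scaling/rescaling trick, exactly as in Brendle's argument. Fix $0<\sigma<1$ and suppose $p\in M$ satisfies $\sigma r<d(x,p)<r$ for all $x\in\Sigma$. By Lemma 3.1 (applied with radius $r$), we already know $p=\Phi_r(\bar x,\bar y)$ for some $(\bar x,\bar y)\in V_r$, where $\bar x$ is the interior minimum point of $f(x)=ru(x)+\tfrac12 d(x,p)^2$; so the only thing that needs to be established is the lower bound $|\nabla^\Sigma u(\bar x)|^2+|\bar y|^2>\sigma^2$. From the construction in the proof of Lemma 3.1 we have the identity $r^2\bigl(|\nabla^\Sigma u(\bar x)|^2+|\bar y|^2\bigr)=d(\bar x,p)^2$, so the desired inequality is equivalent to $d(\bar x,p)>\sigma r$, which holds because $\bar x\in\Sigma$ and $d(x,p)>\sigma r$ for every $x\in\Sigma$ by hypothesis. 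This already gives the result.

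Since that is essentially immediate, let me instead describe the route that does not quote the internal identity of Lemma 3.1's proof but re-runs the ABP/first-variation argument directly, which is the cleaner way to present it. First I would note the set in question is non-empty only for $r$ large, and pick such a $p$ with $\sigma r<d(x,p)<r$ on $\Sigma$. As in Lemma 3.1, the function $f(x)=ru(x)+\tfrac12 d(x,p)^2$ attains its minimum at an interior point $\bar x\in\Sigma\setminus\partial\Sigma$: the boundary is ruled out by Proposition 3.2, whose computation $\frac{d}{dt}f<0$ along the inward curve $C$ uses only $\langle\nabla^\Sigma u,\bar\nu\rangle=1$ and $d(x,p)<r$, both of which still hold. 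Then the first-variation-of-energy argument produces $\bar y\in T^\bot_{\bar x}\Sigma$ with $\bar\gamma'(0)=r\nabla^\Sigma u(\bar x)+r\bar y$ where $\bar\gamma$ is the minimizing geodesic from $\bar x$ to $p$, so $\Phi_r(\bar x,\bar y)=p$ and $r^2(|\nabla^\Sigma u(\bar x)|^2+|\bar y|^2)=|\bar\gamma'(0)|^2=d(\bar x,p)^2$. The minimization of $ru(\gamma(0))+\tfrac12\int_0^1|\gamma'|^2$ over paths ending at $p$ shows $(\bar x,\bar y)\in V_r$ exactly as before.

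It remains only to insert the two-sided bound on $d(\bar x,p)$. Combining $r^2(|\nabla^\Sigma u(\bar x)|^2+|\bar y|^2)=d(\bar x,p)^2$ with $\sigma r<d(\bar x,p)<r$ (the latter from $\bar x\in\Sigma$ and the hypothesis on $p$) yields $\sigma^2<|\nabla^\Sigma u(\bar x)|^2+|\bar y|^2<1$, so $(\bar x,\bar y)$ lies in $\{(x,y)\in V_r:|\nabla^\Sigma u(x)|^2+|y|^2>\sigma^2\}$ and $p=\Phi_r(\bar x,\bar y)$ is in its image, as claimed.

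Honestly, there is no real obstacle here: every ingredient is already in place from Lemma 3.1 and Proposition 3.2, and the whole content of Lemma 3.4 is the bookkeeping observation that the contact point $\bar x$ inherits the lower distance bound $d(\bar x,p)>\sigma r$ from the defining property of $p$. The only point that deserves a word of care is making sure the minimum of $f$ is still interior in this annular setting — but that part of Proposition 3.2 never used an upper radius other than $d(x,p)<r$, so it transfers verbatim. I would therefore write the proof as a short paragraph: invoke Lemma 3.1 to get $(\bar x,\bar y)\in V_r$ with $\Phi_r(\bar x,\bar y)=p$ and $d(\bar x,p)^2=r^2(|\nabla^\Sigma u(\bar x)|^2+|\bar y|^2)$, then observe $d(\bar x,p)>\sigma r$ since $\bar x\in\Sigma$, and conclude.
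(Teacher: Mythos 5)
Your proposal is correct and takes essentially the same route as the paper: invoke Lemma 3.1 to obtain $(\bar{x},\bar{y})\in V_{r}$ with $\Phi_{r}(\bar{x},\bar{y})=p$, then convert the hypothesis $d(\bar{x},p)>\sigma r$ (valid since $\bar{x}\in\Sigma$) into the bound $\left|\nabla^{\Sigma}u(\bar{x})\right|^{2}+\left|\bar{y}\right|^{2}>\sigma^{2}$. The only cosmetic difference is that the paper needs neither the internal equality from Lemma 3.1's construction nor a re-run of that argument: for \emph{any} preimage provided by the statement of Lemma 3.1 one has $\sigma^{2}r^{2}<d(\bar{x},p)^{2}=d\left(\bar{x},\exp_{\bar{x}}\left(r\nabla^{\Sigma}u(\bar{x})+r\bar{y}\right)\right)^{2}\leq\left|r\nabla^{\Sigma}u(\bar{x})+r\bar{y}\right|^{2}=r^{2}\left(\left|\nabla^{\Sigma}u(\bar{x})\right|^{2}+\left|\bar{y}\right|^{2}\right)$, which already finishes the proof.
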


    \begin{proof}
   	Fix a point $ p\in M$ with the property that $\sigma r<d(x, p)<r$ for
   	all $x\in\Sigma$.
   	By Lemma 3.1, 
   	we can find a point $ \left(\bar{x}, \bar{y}\right)\in V_{r}$ such that
    \begin{equation*}
		\Phi_{r}\left( \bar{x}, \bar{y}\right)=\mathrm{exp}_{\bar{x}}\left( rD_{\Sigma}u\left( \bar{x}\right)+r\bar{y}\right)
		=p.
	\end{equation*}
	 This implies
	     \begin{equation*}
	 	\begin{split}
	 		\sigma^{2}r^{2}
	 		&<d\left(\bar{x}, p \right)^{2}\\
	 		&=d\left(\bar{x}, \mathrm{exp}_{\bar{x}}\left( rD_{\Sigma}u\left( \bar{x}\right)+r\bar{y}\right) \right)^{2}\\
	 		&\leq \left| rD^{\Sigma}u\left( \bar{x}\right)
	 		+r \bar{y}\right|^{2}\\
	 		&=r^{2}\left( \left| \nabla^{\Sigma}u \left( \bar{x}\right) \right|^{2}+\left|\bar{y} \right|^{2}\right). 
	 	\end{split}
	\end{equation*}  
    Therefore,
	$\left|\nabla^{\Sigma}u \left( x\right) \right|^{2}+\left| y\right|^{2}>\sigma^{2}$.
	This completes the proof of Lemma 3.3.
    \end{proof}

    To complete the proof of Theorem 1.1, we also need the following three lemmas in \cite{Bren}. 
    
    \begin{lemma}
    	(\cite{Bren}, Lemma 3.1)	
     Suppose that $\left( \bar{x},\bar{y}\right)\in V_{r}$ and
     let $\bar{\gamma}\left( t\right):=\mathrm{exp}_{\bar{x}}
     \left( rt\nabla^{\Sigma}u\left( \bar{x}\right)+rt\bar{y}\right)$ 
     for $t\in \left[ 0, 1\right]$. If $Z$ is a vector field along $\bar{\gamma}$ satisfying $Z\left( 0\right)\in T_{x}^{\bot}\Sigma$ and $Z\left( 1\right) =0$, then
    	\begin{equation*}
    	\begin{split}	
         r\left( \nabla^{2}_{\Sigma}u\right) \left(Z\left( 0\right), Z\left( 0\right) \right)-r\left\langle\Rmnum{2}\left(Z\left( 0\right), Z\left( 0\right)\right), \bar{y} \right\rangle \\ +\int_{0}^{1}\left| \bar{D}_{t}Z\left( t\right) \right| ^{2}
         -\bar{R}\left(\bar{\gamma}\left( t\right), Z\left( t\right) , \bar{\gamma}\left( t\right), Z\left( t\right) \right) \geq0.
        \end{split}	
    	\end{equation*}		
    \end{lemma}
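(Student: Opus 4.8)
The plan is to run the second‑variation‑of‑energy argument that underlies Brendle's ABP scheme, exploiting that membership in the contact set $V_{r}$ forces $\bar{\gamma}$ to minimize a suitable functional. Write $p:=\Phi_{r}(\bar{x},\bar{y})=\mathrm{exp}_{\bar{x}}\!\left(r\nabla^{\Sigma}u(\bar{x})+r\bar{y}\right)$. Evaluating the inequality defining $V_{r}$ at $x=\bar{x}$, together with the trivial bound $d(\bar{x},p)\le\left|r\nabla^{\Sigma}u(\bar{x})+r\bar{y}\right|=r\sqrt{\left|\nabla^{\Sigma}u(\bar{x})\right|^{2}+\left|\bar{y}\right|^{2}}$, gives $d(\bar{x},p)=\left|r\nabla^{\Sigma}u(\bar{x})+r\bar{y}\right|$; hence $\bar{\gamma}|_{[0,1]}$ is a minimizing geodesic from $\bar{x}$ to $p$ with $\bar{\gamma}'(0)=r\nabla^{\Sigma}u(\bar{x})+r\bar{y}$, and the inequality defining $V_{r}$ now reads $r\,u(x)+\frac{1}{2}d(x,p)^{2}\ge r\,u(\bar{x})+\frac{1}{2}d(\bar{x},p)^{2}$ for every $x\in\Sigma$. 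Arguing exactly as in the proof of Lemma 3.1 (reparametrize a competitor by arc length and bound $\frac{1}{2}\int_{0}^{1}\left|\gamma'\right|^{2}\ge\frac{1}{2}d(\gamma(0),p)^{2}$), this upgrades to the statement that $\bar{\gamma}$ minimizes
\[
E(\gamma):=r\,u(\gamma(0))+\frac{1}{2}\int_{0}^{1}\left|\gamma'(t)\right|^{2}\,dt
\]
among all paths $\gamma\colon[0,1]\to M$ with $\gamma(0)\in\Sigma$ and $\gamma(1)=p$; here $\bar{x}\in\Sigma\setminus\partial\Sigma$ since $V_{r}\subseteq U$, and $u\in C^{2,\gamma}$ by elliptic regularity, so $\nabla^{2}_{\Sigma}u$ is defined.

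Next I would test this minimality against a variation adapted to $Z$. Since $Z(0)\in T_{\bar{x}}\Sigma$ and $Z(1)=0$, one constructs a smooth family $\gamma_{s}\colon[0,1]\to M$, $|s|<\varepsilon$, with $\gamma_{0}=\bar{\gamma}$, $\gamma_{s}(1)=p$ for all $s$, $\gamma_{s}(0)=:c(s)$ a curve in $\Sigma$ with $c(0)=\bar{x}$ and $c'(0)=Z(0)$, and $\partial_{s}\gamma_{s}\big|_{s=0}=Z$; concretely one may fix such a curve $c$ in $\Sigma$ and put $\gamma_{s}(t)=\mathrm{exp}_{\bar{\gamma}(t)}\!\left(sZ(t)+(1-t)\bigl(\mathrm{exp}_{\bar{x}}^{-1}c(s)-sZ(0)\bigr)\right)$ for $s$ small. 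Because $\bar{\gamma}$ minimizes $E$ and $s\mapsto E(\gamma_{s})$ is a $C^{2}$ function of $s$, the value $s=0$ is a minimum, so $\frac{d^{2}}{ds^{2}}\big|_{s=0}E(\gamma_{s})\ge 0$.

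It remains to expand this second derivative. Set $a:=\bar{D}_{s}\partial_{s}\gamma_{s}(0)\big|_{s=0}\in T_{\bar{x}}M$ and write $a=a^{\top}+a^{\perp}$ with $a^{\top}\in T_{\bar{x}}\Sigma$ and $a^{\perp}=\Rmnum{2}(Z(0),Z(0))\in T_{\bar{x}}^{\bot}\Sigma$. For the boundary term, $\frac{d^{2}}{ds^{2}}\big|_{0}r\,u(c(s))=r\,(\nabla^{2}_{\Sigma}u)(Z(0),Z(0))+r\,\langle\nabla^{\Sigma}u(\bar{x}),a^{\top}\rangle$. For the energy term, the classical second‑variation formula, using that $\gamma_{s}(1)\equiv p$ (so the $t=1$ boundary contributions vanish) and that the two $\langle Z(0),\bar{D}_{t}Z(0)\rangle$ boundary terms produced by the integration by parts of $\int_{0}^{1}\langle Z,\bar{D}_{t}\bar{D}_{t}Z\rangle$ cancel, gives
\[
\frac{d^{2}}{ds^{2}}\Big|_{0}\frac{1}{2}\int_{0}^{1}\left|\partial_{t}\gamma_{s}\right|^{2}dt=-\langle a,\bar{\gamma}'(0)\rangle+\int_{0}^{1}\Bigl(\left|\bar{D}_{t}Z(t)\right|^{2}-\bar{R}\bigl(\bar{\gamma}'(t),Z(t),\bar{\gamma}'(t),Z(t)\bigr)\Bigr)\,dt.
\]
Adding the two, and using $\bar{\gamma}'(0)=r\nabla^{\Sigma}u(\bar{x})+r\bar{y}$ to write $\langle a,\bar{\gamma}'(0)\rangle=r\,\langle a^{\top},\nabla^{\Sigma}u(\bar{x})\rangle+r\,\langle\Rmnum{2}(Z(0),Z(0)),\bar{y}\rangle$, the two terms containing $a^{\top}$ cancel and what survives is precisely
\[
r\,(\nabla^{2}_{\Sigma}u)(Z(0),Z(0))-r\,\langle\Rmnum{2}(Z(0),Z(0)),\bar{y}\rangle+\int_{0}^{1}\Bigl(\left|\bar{D}_{t}Z(t)\right|^{2}-\bar{R}\bigl(\bar{\gamma}'(t),Z(t),\bar{\gamma}'(t),Z(t)\bigr)\Bigr)\,dt\ \ge\ 0,
\]
which is the assertion (the sign in the curvature term being that fixed in Section 2).

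I expect the only delicate point to be the bookkeeping in this last step: tracking exactly which boundary terms survive the two integrations by parts in the second‑variation formula, and verifying that the tangential endpoint‑acceleration term coming from $r\,u(c(s))$ cancels the one coming from $\langle a,\bar{\gamma}'(0)\rangle$. Everything else is soft — the minimality of $\bar{\gamma}$ for $E$ is just a rereading of the definition of $V_{r}$, as in Lemma 3.1, and the competitor variation $\gamma_{s}$ with base point constrained to $\Sigma$ exists by a routine construction that uses only $Z(0)\in T_{\bar{x}}\Sigma$ and $Z(1)=0$.
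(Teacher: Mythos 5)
Your argument is correct and is essentially the proof the paper relies on: the paper states this lemma without proof, citing Brendle's Lemma 3.1, and that argument is exactly your second-variation computation for the functional $E(\gamma)=ru(\gamma(0))+\frac{1}{2}\int_{0}^{1}|\gamma'(t)|^{2}\,dt$, whose minimality along $\bar{\gamma}$ follows from the definition of $V_{r}$, with the endpoint acceleration terms cancelling via the Gauss formula just as you describe. One remark: you correctly read the hypothesis as $Z(0)\in T_{\bar{x}}\Sigma$; the statement's ``$Z(0)\in T_{x}^{\bot}\Sigma$'' is a typo (otherwise $(\nabla^{2}_{\Sigma}u)(Z(0),Z(0))$ and $\Rmnum{2}(Z(0),Z(0))$ would be undefined), and likewise $\bar{R}$ should be evaluated on $\bar{\gamma}'(t)$ rather than $\bar{\gamma}(t)$, as in your formula.
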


     \begin{lemma}
    	(\cite{Bren}, Lemma 3.2)	
      Suppose that $\left( \bar{x},\bar{y}\right)\in V_{r}$. Then 
      $g+r\nabla^{2}_{\Sigma}u\left(\bar{x}\right)-r\left\langle\Rmnum{2}\left( \bar{x}\right),\bar{y}\right\rangle \geq0$.
     \end{lemma}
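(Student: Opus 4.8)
The plan is to derive this pointwise inequality directly from Lemma 3.4 (Lemma 3.1 of \cite{Bren}) by feeding it a well-chosen vector field along the geodesic $\bar{\gamma}$. Observe first that the conclusion is equivalent to requiring
\begin{equation*}
	\left| w\right|^{2}+r\left( \nabla^{2}_{\Sigma}u\right)\left( \bar{x}\right)\left( w,w\right)-r\left\langle \Rmnum{2}\left( \bar{x}\right)\left( w,w\right),\bar{y}\right\rangle \geq 0
\end{equation*}
for every tangent vector $w\in T_{\bar{x}}\Sigma$, since this says precisely that the symmetric bilinear form $g+r\nabla^{2}_{\Sigma}u\left( \bar{x}\right)-r\left\langle \Rmnum{2}\left( \bar{x}\right),\bar{y}\right\rangle$ is positive semidefinite. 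So I would fix $\left( \bar{x},\bar{y}\right)\in V_{r}$ and an arbitrary $w\in T_{\bar{x}}\Sigma$, and work with the geodesic $\bar{\gamma}\left( t\right)=\mathrm{exp}_{\bar{x}}\left( rt\nabla^{\Sigma}u\left( \bar{x}\right)+rt\bar{y}\right)$, $t\in\left[ 0,1\right]$, which is exactly the curve appearing in Lemma 3.4.

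Next I would take the test field to be $Z\left( t\right):=\left( 1-t\right)E\left( t\right)$, where $E\left( t\right)$ denotes the parallel transport of $w$ along $\bar{\gamma}$; then $\bar{D}_{t}E\equiv 0$ and $\left| E\left( t\right)\right|\equiv\left| w\right|$, so $Z\left( 0\right)=w\in T_{\bar{x}}\Sigma$, $Z\left( 1\right)=0$, and $\bar{D}_{t}Z\left( t\right)=-E\left( t\right)$, whence $\left| \bar{D}_{t}Z\left( t\right)\right|^{2}=\left| w\right|^{2}$ for all $t$ and $\int_{0}^{1}\left| \bar{D}_{t}Z\left( t\right)\right|^{2}dt=\left| w\right|^{2}$. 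Since $Z$ is admissible in Lemma 3.4, that lemma gives
\begin{equation*}
	r\left( \nabla^{2}_{\Sigma}u\right)\left( w,w\right)-r\left\langle \Rmnum{2}\left( w,w\right),\bar{y}\right\rangle+\int_{0}^{1}\left( \left| w\right|^{2}-\bar{R}\left( \bar{\gamma}'\left( t\right),Z\left( t\right),\bar{\gamma}'\left( t\right),Z\left( t\right)\right)\right)dt\geq 0.
\end{equation*}
Because $M$ has nonnegative sectional curvature, the curvature term $\bar{R}\left( \bar{\gamma}'\left( t\right),Z\left( t\right),\bar{\gamma}'\left( t\right),Z\left( t\right)\right)$ is nonnegative for every $t$, so the integrand is at most $\left| w\right|^{2}$ and the displayed inequality yields $\left| w\right|^{2}+r\left( \nabla^{2}_{\Sigma}u\right)\left( w,w\right)-r\left\langle \Rmnum{2}\left( w,w\right),\bar{y}\right\rangle\geq 0$, which is the required scalar inequality.

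I do not expect a genuine obstacle: the argument only uses the choice of test field making the boundary data $Z\left( 0\right)=w$, $Z\left( 1\right)=0$ compatible with $\left| \bar{D}_{t}Z\right|^{2}\equiv\left| w\right|^{2}$ along $\bar{\gamma}$ --- which the ansatz $Z\left( t\right)=\left( 1-t\right)E\left( t\right)$ realizes --- together with discarding the curvature term via the sign hypothesis on the sectional curvature of $M$. The two points to check carefully are that $Z\left( 0\right)$ is tangent to $\Sigma$ at $\bar{x}$, so that $\left( \nabla^{2}_{\Sigma}u\right)\left( Z\left( 0\right),Z\left( 0\right)\right)$ and $\Rmnum{2}\left( Z\left( 0\right),Z\left( 0\right)\right)$ are defined in Lemma 3.4, and that the geodesic $\bar{\gamma}$ used here coincides with the one in that lemma; both hold by construction.
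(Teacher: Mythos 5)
Your proof is correct and is essentially the argument the paper relies on: the paper only cites Brendle's Lemma 3.2, and Brendle's proof is exactly your choice of test field $Z\left(t\right)=\left(1-t\right)E\left(t\right)$ (parallel transport of an arbitrary $w\in T_{\bar{x}}\Sigma$) in the second-variation inequality of Lemma 3.4, with the curvature term discarded via nonnegative sectional curvature, yielding $\left|w\right|^{2}+r\left(\nabla^{2}_{\Sigma}u\right)\left(w,w\right)-r\left\langle \Rmnum{2}\left(w,w\right),\bar{y}\right\rangle\geq 0$ for all $w$. You also read the hypothesis of Lemma 3.4 correctly as $Z\left(0\right)\in T_{\bar{x}}\Sigma$ (tangent), which is the right interpretation despite the paper's statement writing $T_{x}^{\bot}\Sigma$ there.
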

 
     \begin{lemma}
  	  (\cite{Bren}, Lemma 3.3)	
    Suppose that $\left( \bar{x},\bar{y}\right)\in V_{r}$ and
  	let $\bar{\gamma}\left( t\right):=\mathrm{exp}_{\bar{x}}
  	\left( rt\nabla^{\Sigma}u\left( \bar{x}\right)+rt\bar{y}\right)$ 
  	for $t\in \left[ 0, 1\right]$.  
  	Moreover, let $\left\lbrace e_{1}, \ldots, e_{n}\right\rbrace$ be an orthonormal basis of $T_{x}^{\bot}\Sigma$.
  	Suppose that $W$ is a Jacobi field along $\bar{\gamma}$ satisfying  $W\left( 0\right)\in T_{x}^{\bot}\Sigma$ and
  	$\left\langle \bar{D}_{t}W\left( 0\right), e_{j}\right\rangle=
  	r\left( \nabla^{2}_{\Sigma}u\right) \left(W\left( 0\right), e_{j}\right)-r\left\langle\Rmnum{2}\left(W\left( 0\right), e_{j}\right),\bar{y}\right\rangle$
  	for each $1\leq j\leq n$.
  	If $W\left( \tau\right)=0$ for some $0<\tau<1$, 
  	then $W$ vanishes identically.
     \end{lemma}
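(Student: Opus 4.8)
The plan is to argue by contradiction through the index form of Lemma 3.4. Assume $W$ is a Jacobi field along $\bar{\gamma}$ satisfying $W(0)\in T_{x}^{\bot}\Sigma$ and the prescribed condition on $\bar{D}_{t}W(0)$, and suppose $W(\tau)=0$ for some $\tau\in(0,1)$. First I would introduce the broken competitor
\begin{equation*}
  Z(t):=
  \begin{cases}
    W(t), & 0\le t\le\tau,\\
    0, & \tau\le t\le 1.
  \end{cases}
\end{equation*}
Because $W(\tau)=0$, the field $Z$ is continuous and piecewise smooth, and it satisfies $Z(0)=W(0)\in T_{x}^{\bot}\Sigma$ and $Z(1)=0$, so $Z$ is an admissible test field in Lemma 3.4.

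Next I would evaluate the expression on the left-hand side of Lemma 3.4 at $Z$. On $[\tau,1]$ the field vanishes, so all contributions come from $[0,\tau]$, where $Z=W$ solves the Jacobi equation; a single integration by parts then gives
\begin{equation*}
  \int_{0}^{1}\Big(\left|\bar{D}_{t}Z(t)\right|^{2}-\bar{R}\big(\bar{\gamma}'(t),Z(t),\bar{\gamma}'(t),Z(t)\big)\Big)\,dt
  =\langle\bar{D}_{t}W(\tau),W(\tau)\rangle-\langle\bar{D}_{t}W(0),W(0)\rangle
  =-\langle\bar{D}_{t}W(0),W(0)\rangle .
\end{equation*}
Expanding $W(0)=\sum_{j}\langle W(0),e_{j}\rangle e_{j}$ in the orthonormal basis $\{e_{1},\dots,e_{n}\}$ of $T_{x}^{\bot}\Sigma$ and inserting the hypothesis $\langle\bar{D}_{t}W(0),e_{j}\rangle=r(\nabla^{2}_{\Sigma}u)(W(0),e_{j})-r\langle\Rmnum{2}(W(0),e_{j}),\bar{y}\rangle$, bilinearity yields $\langle\bar{D}_{t}W(0),W(0)\rangle=r(\nabla^{2}_{\Sigma}u)(W(0),W(0))-r\langle\Rmnum{2}(W(0),W(0)),\bar{y}\rangle$. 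Since the boundary term of Lemma 3.4, evaluated at $Z(0)=W(0)$, is exactly this same quantity, all terms cancel and Lemma 3.4 holds \emph{with equality} for this $Z$.

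Finally I would exploit the equality. Let $\mathcal{Q}$ be the quadratic functional on the left-hand side of Lemma 3.4; then $\mathcal{Q}\ge 0$ on all admissible fields and $\mathcal{Q}(Z)=0$, so $Z$ minimizes $\mathcal{Q}$ and hence the associated symmetric bilinear form satisfies $\mathcal{Q}(Z,\xi)=0$ for every admissible variation $\xi$ (test $\mathcal{Q}(Z+s\xi)\ge 0$ for $s$ of both signs). Integrating by parts in $\mathcal{Q}(Z,\xi)$ over $[0,\tau]$ and over $[\tau,1]$ separately, the interior integrals vanish since $Z$ is a Jacobi field on each subinterval, the endpoint term at $t=1$ vanishes since $\xi(1)=0$, and the endpoint term at $t=0$ vanishes precisely because of the boundary condition assumed on $W(0)$; what survives is the jump term $\langle\bar{D}_{t}W(\tau),\xi(\tau)\rangle$ at the corner $t=\tau$. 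Choosing $\xi$ with $\xi(0)=\xi(1)=0$ and $\xi(\tau)$ an arbitrary vector forces $\bar{D}_{t}W(\tau)=0$. Then $W$ is a Jacobi field with $W(\tau)=\bar{D}_{t}W(\tau)=0$, so by uniqueness for the linear second-order Jacobi ODE it vanishes identically along $\bar{\gamma}$.

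The one genuinely delicate point is the passage from ``equality in the inequality of Lemma 3.4'' to ``$Z$ is a critical point of $\mathcal{Q}$'' and the resulting absence of a corner at $t=\tau$; this is the standard fact that a nonnegative quadratic form vanishing at a point is degenerate in that direction, and once it is set up the remaining steps (the single integration by parts, the expansion in the basis $\{e_{j}\}$, and ODE uniqueness) are routine.
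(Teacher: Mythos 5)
Your argument is correct in substance, and it is essentially the standard proof: this paper does not prove the lemma at all (it is imported verbatim as Lemma 3.3 of \cite{Bren}), and Brendle's own argument is exactly the one you outline --- insert the broken field $Z=W\cdot 1_{[0,\tau]}$ into the index-form inequality (here Lemma 3.4), integrate by parts using the Jacobi equation and the boundary condition at $t=0$ to see that the inequality is saturated, conclude that there can be no corner at $t=\tau$, i.e. $\bar{D}_{t}W(\tau)=0$, and finish by uniqueness for the Jacobi ODE. Your reformulation of the ``no corner'' step via degeneracy of the nonnegative quadratic form (polarizing $\mathcal{Q}(Z+s\xi)\ge 0$) is equivalent to the direct perturbation argument and is fine. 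Two small points deserve a sentence in a written version. First, Lemma 3.4 is stated for (smooth) vector fields along $\bar{\gamma}$, while both $Z$ and $Z+s\xi$ are only piecewise smooth; you should either mollify the corner and pass to the limit, or observe that the proof of Lemma 3.4 (a second-variation argument for the functional $ru(\gamma(0))+\tfrac12\int_0^1|\gamma'|^2$ at its minimizer, for which $(\bar{x},\bar{y})\in V_r$) is valid for broken variation fields, so the inequality holds in that class. Second, as stated in this paper the hypotheses put $W(0)$ and the basis $\{e_j\}$ in $T_{x}^{\bot}\Sigma$, which is a transcription slip (in \cite{Bren} they lie in $T_{\bar{x}}\Sigma$, as they must for $(\nabla^{2}_{\Sigma}u)(W(0),e_j)$ to make sense); your proof is insensitive to this provided the same subspace is used consistently in Lemma 3.4 and in the present lemma, which is exactly what the cancellation of the $t=0$ boundary terms requires.
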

 
    Among these, Lemma 3.5 holds when $M$ has a nonnegative sectional curvature.
    Based on the above three lemmas, we also obtain the following lemma.
    
    \begin{lemma}	\label{Le:2.20}
    (\cite{Bren}, Lemma 3.6)
     The Jacobian determinant of $\Phi_{r}$ is given by
      \begin{equation*}
      	\left| \mathrm{det}D\Phi_{r}\left( x, y\right)\right|  
      	\leq r^{m}\mathrm{det}\left(g+rD_{\Sigma}^{2}u\left( x\right)
      	-r\left\langle \Rmnum{2}\left( x\right), y\right\rangle \right) 
      \end{equation*} 
    for all $\left( x,y\right) \in V_{r}$.
    \end{lemma}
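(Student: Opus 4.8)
The plan is to identify, at a point of the contact set, the differential of $\Phi_{r}$ with the ``evaluation at time one'' map on Jacobi fields along the associated geodesic, and then to bound the resulting determinant by comparison with the zero--curvature model; the nonnegativity of the sectional curvature of $M$ enters only in this last comparison. Fix $(\bar x,\bar y)\in V_{r}$ and put $\bar\gamma(t)=\mathrm{exp}_{\bar x}(rt\nabla^{\Sigma}u(\bar x)+rt\bar y)$, $t\in[0,1]$. \textbf{Step 1.} First I would differentiate the defining relation $\Phi_{r}(x,y)=\mathrm{exp}_{x}(r\nabla^{\Sigma}u(x)+ry)$ along a curve $s\mapsto(x(s),y(s))$ in the total space of the normal bundle through $(\bar x,\bar y)$ with velocity $(v,w)\in T_{\bar x}\Sigma\oplus T_{\bar x}^{\bot}\Sigma$. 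Since $u\in C^{2,\gamma}$, the first variation of geodesics together with the Gauss and Weingarten formulas shows that the variation field is the Jacobi field $J$ along $\bar\gamma$ with $J(0)=v$ and
\begin{equation*}
\bar D_{t}J(0)=r\bigl(D^{\Sigma}_{v}\nabla^{\Sigma}u+\Rmnum{2}(v,\nabla^{\Sigma}u)-S_{\bar y}(v)+w\bigr),
\end{equation*}
where $S_{\bar y}$ is the shape operator, $\langle S_{\bar y}(v),e\rangle=\langle\Rmnum{2}(v,e),\bar y\rangle$ for $e\in T_{\bar x}\Sigma$; and $D\Phi_{r}(\bar x,\bar y)(v,w)=J(1)$.

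\textbf{Step 2.} Choose orthonormal bases $e_{1},\dots,e_{n}$ of $T_{\bar x}\Sigma$ and $e_{n+1},\dots,e_{n+m}$ of $T_{\bar x}^{\bot}\Sigma$; let $W_{1},\dots,W_{n}$ be the Jacobi fields of Step 1 with $(v,w)=(e_{i},0)$, let $W_{n+1},\dots,W_{n+m}$ be those with $(v,w)=(0,e_{a})$, and let $E_{1}(t),\dots,E_{n+m}(t)$ be the parallel orthonormal frame along $\bar\gamma$ with $E_{\alpha}(0)=e_{\alpha}$. Expressing the $W_{\beta}$ in this frame defines a matrix $P(t)$ with $|\mathrm{det}\,D\Phi_{r}(\bar x,\bar y)|=|\mathrm{det}\,P(1)|$, and the Jacobi equation becomes $\bar D_{t}^{2}P+\mathcal R(t)\,P=0$, where $\mathcal R(t)$ is the symmetric Jacobi operator $X\mapsto\bar R(X,\bar\gamma'(t))\bar\gamma'(t)$ written in the frame $\{E_{\alpha}\}$. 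From Step 1 one reads off, in the block splitting $\mathbb R^{n}\oplus\mathbb R^{m}$,
\begin{equation*}
P(0)=\begin{pmatrix}I_{n}&0\\0&0\end{pmatrix},\qquad \bar D_{t}P(0)=\begin{pmatrix}B&0\\ C&rI_{m}\end{pmatrix},\qquad B_{ij}=rD_{\Sigma}^{2}u(e_{i},e_{j})-r\langle\Rmnum{2}(e_{i},e_{j}),\bar y\rangle,
\end{equation*}
so the zero--curvature solution $P_{0}(t):=P(0)+t\,\bar D_{t}P(0)$ is block lower triangular and $\mathrm{det}\,P_{0}(1)=r^{m}\,\mathrm{det}(I_{n}+B)=r^{m}\,\mathrm{det}\bigl(g+rD_{\Sigma}^{2}u(\bar x)-r\langle\Rmnum{2}(\bar x),\bar y\rangle\bigr)$, which is exactly the right--hand side of the lemma and is nonnegative by Lemma~3.5.

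\textbf{Step 3.} It remains to prove $|\mathrm{det}\,P(1)|\le\mathrm{det}\,P_{0}(1)$. By Step 1 each column $W_{\alpha}$, and hence each Jacobi field $\sum_{\alpha}\xi_{\alpha}W_{\alpha}$, satisfies the hypotheses of Lemma~3.6, and the $W_{\alpha}$ are linearly independent (their initial data are distinct), so no nonzero combination vanishes on $(0,1)$; thus $P(t)$ is invertible for $t\in(0,1)$. The constancy of the Wronskian $\langle W_{\alpha},\bar D_{t}W_{\beta}\rangle-\langle\bar D_{t}W_{\alpha},W_{\beta}\rangle$, together with the symmetry of $B$ and $e_{i}\bot e_{a}$, makes $P(t)^{*}\bar D_{t}P(t)$ symmetric for all $t$, so $U(t):=\bar D_{t}P(t)\,P(t)^{-1}$ is a symmetric solution of $\bar D_{t}U+U^{2}+\mathcal R(t)=0$ on $(0,1)$, while $U_{0}(t):=\bar D_{t}P_{0}(t)\,P_{0}(t)^{-1}$ solves $\bar D_{t}U_{0}+U_{0}^{2}=0$. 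Since $\mathcal R\ge 0$ by the nonnegativity of the sectional curvature, a Riccati comparison gives $U(t)\le U_{0}(t)$, hence $\mathrm{tr}\,U(t)\le\mathrm{tr}\,U_{0}(t)$, on $(0,1)$. Integrating $\tfrac{d}{dt}\log|\mathrm{det}\,P|=\mathrm{tr}\,U$ and the analogue for $P_{0}$, and using that the initial data of Step 2 force $\mathrm{det}\,P(t)/\mathrm{det}\,P_{0}(t)\to1$ as $t\to0^{+}$, gives $|\mathrm{det}\,P(t)|\le\mathrm{det}\,P_{0}(t)$ for $t\in(0,1)$; letting $t\to1^{-}$ yields the claim.

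I expect the main obstacle to be the analysis at $t=0$: the rank of $P(0)$ drops by $m$, so $P$ and $P_{0}$ are both singular there and $U,U_{0}$ blow up like $t^{-1}$ in the normal directions, so the comparison $U\le U_{0}$ and the normalization $\mathrm{det}\,P/\mathrm{det}\,P_{0}\to1$ must be justified carefully through that singularity — precisely where the explicit initial data of Step 2 and the invertibility of $P$ on $(0,1)$ supplied by Lemma~3.6 are needed. The bookkeeping in Step 1 is routine but must be done with care, since it is what makes only the tangential block of $\bar D_{t}P(0)$ constrained and $I_{n}+B$ equal to the matrix of $g+rD_{\Sigma}^{2}u-r\langle\Rmnum{2},\bar y\rangle$.
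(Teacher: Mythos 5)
Your argument is correct and is essentially the proof of the cited source: the paper itself does not prove this lemma but imports it from Brendle (\cite{Bren}, Lemma 3.6), whose proof is exactly your scheme — identify $D\Phi_{r}$ with evaluation of Jacobi fields whose initial data produce the block matrix $P(0)$, $\bar D_{t}P(0)$ you wrote, use the non-vanishing statement (the paper's Lemma 3.6, whose hypotheses any linear combination of your $W_{\alpha}$ satisfies) to invert $P(t)$ on $(0,1)$, and then run the Wronskian-symmetry/Riccati comparison with the flat solution, nonnegative sectional curvature giving $\mathcal R\geq 0$ and Lemma 3.5 the nonnegativity of the limit. Your reading of the initial conditions also silently corrects the paper's typos ($T_{x}\Sigma$ rather than $T_{x}^{\bot}\Sigma$ in its Lemmas 3.4 and 3.6), which is the intended statement.
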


     \begin{lemma}	\label{Le:2.30}
    	The Jacobian determinant of $\Phi_{r}$ satisfies
    	\begin{equation*}
    	   \left| \mathrm{det}D\Phi_{r}\left( x, y\right)\right|  
    		\leq r^{m}\dfrac{1}{\mathrm{det}A\left( x\right)}
    		\left(\dfrac{\mathrm{tr}_{g}A\left( x\right)}{n}+r\left( \mathrm{det}A\left( x\right) \right) ^{\frac{1}{n-1}}\right)^{n}
    	\end{equation*} 
    	for all $\left( x, y\right) \in V_{r}$.
    \end{lemma}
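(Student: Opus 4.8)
The plan is to substitute the defining equation \eqref{eq:2.3} for $u$ into Lemma \ref{Le:2.20}, using the generalized trace inequality of Lemma 2.1. Fix $(x,y)\in V_{r}$; since $V_{r}\subset U$, we have $x\in\Sigma\setminus\partial\Sigma$, so the interior equation of \eqref{eq:2.3} holds at $x$, and moreover $|\nabla^{\Sigma}u(x)|^{2}+|y|^{2}<1$. Write $B:=g+rD_{\Sigma}^{2}u(x)-r\langle\Rmnum{2}(x),y\rangle$, a symmetric $(0,2)$-tensor which is nonnegative definite (see \cite{Bren}, Lemma 3.2), while $A(x)$ is positive definite. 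Passing to a $g_{\Sigma}$-orthonormal basis of $T_{x}\Sigma$ identifies $A(x)$ and $B$ with honest symmetric matrices: $\det A(x)$ and $\det B$ become matrix determinants, $\mathrm{tr}_{g}$ becomes the matrix trace, and the composition $A\circ B$ becomes the matrix product $AB$, whence $\det(A\circ B)=\det A(x)\det B$. Applying Lemma 2.1 to $A(x)$ and $B$ (note $\mathrm{tr}_{g}(A\circ B)\geq0$ automatically since both are nonnegative definite) gives
\[
\det A(x)\cdot\det B\leq\left(\frac{\mathrm{tr}_{g}(A\circ B)}{n}\right)^{n}.
\]

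It remains to control $\mathrm{tr}_{g}(A\circ B)$. From $\mathrm{tr}_{g}(A\circ g)=\mathrm{tr}_{g}A$, $\mathrm{tr}_{g}(A\circ D_{\Sigma}^{2}u)=\langle A,D_{\Sigma}^{2}u\rangle$, and $\mathrm{tr}_{g}(A\circ\langle\Rmnum{2},y\rangle)=\langle\langle A,\Rmnum{2}\rangle,y\rangle$ we obtain
\[
\mathrm{tr}_{g}(A\circ B)=\mathrm{tr}_{g}A(x)+r\left\langle A,D_{\Sigma}^{2}u\right\rangle(x)-r\left\langle\left\langle A,\Rmnum{2}\right\rangle(x),y\right\rangle .
\]
A computation in local coordinates gives the product identity $\mathrm{div}_{\Sigma}(A(\nabla^{\Sigma}u))=\langle\mathrm{div}_{\Sigma}A,\nabla^{\Sigma}u\rangle+\langle A,D_{\Sigma}^{2}u\rangle$, so the interior equation of \eqref{eq:2.3} rewrites as
\[
\left\langle A,D_{\Sigma}^{2}u\right\rangle(x)=n\left(\det A(x)\right)^{\frac{1}{n-1}}-\sqrt{|\mathrm{div}_{\Sigma}A|^{2}(x)+|\langle A,\Rmnum{2}\rangle|^{2}(x)}-\left\langle\mathrm{div}_{\Sigma}A,\nabla^{\Sigma}u\right\rangle(x).
\]

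Substituting, the two inner-product terms collect into $-r\big(\langle\mathrm{div}_{\Sigma}A,\nabla^{\Sigma}u\rangle(x)+\langle\langle A,\Rmnum{2}\rangle(x),y\rangle\big)$, and the Cauchy--Schwarz inequality in the orthogonal sum $T_{x}\Sigma\oplus T_{x}^{\bot}\Sigma$ bounds this above by $r\sqrt{|\mathrm{div}_{\Sigma}A|^{2}(x)+|\langle A,\Rmnum{2}\rangle|^{2}(x)}\cdot\sqrt{|\nabla^{\Sigma}u(x)|^{2}+|y|^{2}}$, which is at most $r\sqrt{|\mathrm{div}_{\Sigma}A|^{2}(x)+|\langle A,\Rmnum{2}\rangle|^{2}(x)}$ because $|\nabla^{\Sigma}u(x)|^{2}+|y|^{2}<1$ on $V_{r}$. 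This exactly cancels the negative square-root term, so $\mathrm{tr}_{g}(A\circ B)\leq\mathrm{tr}_{g}A(x)+rn(\det A(x))^{\frac{1}{n-1}}$, hence
\[
\det B\leq\frac{1}{\det A(x)}\left(\frac{\mathrm{tr}_{g}A(x)}{n}+r(\det A(x))^{\frac{1}{n-1}}\right)^{n},
\]
and combining with Lemma \ref{Le:2.20} yields the claimed estimate on $|\det D\Phi_{r}(x,y)|$. The main things to verify carefully are the coordinate derivation of the divergence identity and the compatibility of the orthonormal-frame reduction with Lemma 2.1; neither presents a real difficulty, and the positivity of $A$ guarantees $\frac{\mathrm{tr}_{g}A(x)}{n}+r(\det A(x))^{\frac{1}{n-1}}>0$, so no sign issues arise.
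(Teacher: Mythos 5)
Your proposal is correct and follows essentially the same route as the paper: it combines the Jacobian bound of Lemma \ref{Le:2.20} with the matrix trace inequality of Lemma 2.1, uses the divergence identity $\mathrm{div}_{\Sigma}(A(\nabla^{\Sigma}u))=\langle\mathrm{div}_{\Sigma}A,\nabla^{\Sigma}u\rangle+\langle A,D_{\Sigma}^{2}u\rangle$ together with the defining equation \eqref{eq:2.3}, and then applies Cauchy--Schwarz with $|\nabla^{\Sigma}u(x)|^{2}+|y|^{2}<1$ on $V_{r}$ to absorb the mixed terms. Your explicit remark that $g+rD_{\Sigma}^{2}u-r\langle\Rmnum{2},y\rangle$ is nonnegative definite (needed to invoke Lemma 2.1) is a point the paper leaves implicit, but otherwise the arguments coincide.
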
 

    \begin{proof}
    Given a pair of points $\left( x, y\right) \in V_{r}$,
    by Cauchy-Schwarz inequality and 
    $\left| \nabla^{\Sigma}u\left( x\right) \right|^{2}+\left| y\right| ^{2}<1$,
    we obtain
    \begin{equation*}
    	\begin{split}
    		&-\left\langle \mathrm{div}_{\Sigma}A\left( x\right), \nabla^{\Sigma}u\left( x\right)\right\rangle
    		- \left\langle A\left( x\right) , \left\langle \Rmnum{2}\left( x\right), y\right\rangle\right\rangle  \\
    		=&-\left\langle \mathrm{div}_{\Sigma}A\left( x\right), \nabla^{\Sigma}u\left( x\right)\right\rangle
    		- \left\langle \left\langle A\left( x\right) , \Rmnum{2}\left( x\right)\right\rangle , y\right\rangle\\
    		=&-\left\langle \mathrm{div}_{\Sigma}A\left( x\right)+\left\langle A\left( x\right) , \Rmnum{2}\left( x\right)\right\rangle, \nabla^{\Sigma}u\left( x\right)+y\right\rangle\\
    		\leq&\sqrt{\left| \nabla^{\Sigma}u\left( x\right)\right| ^{2}+\left| y\right| ^{2}}
    		\sqrt{\left| \mathrm{div}_{\Sigma}A\right| ^{2}\left( x\right)+\left| \left\langle A, \Rmnum{2}\right\rangle\right| ^{2}\left( x\right) }\\
    		\leq&\sqrt{\left| \mathrm{div}_{\Sigma}A\right| ^{2}\left( x\right)+\left| \left\langle A, \Rmnum{2}\right\rangle\right| ^{2}\left( x\right) }.
    	\end{split}
    \end{equation*}
    Note that 	
    \begin{equation*}
        \mathrm{div}_{\Sigma}\left( A\left( \nabla^{\Sigma}u\right) \right)
        =\left\langle \mathrm{div}_{\Sigma}A, \nabla^{\Sigma}u\right\rangle 
        + \left\langle A, D_{\Sigma}^{2}u\right\rangle.
    \end{equation*}
    According to the equation of $u$, we have
    \begin{equation*}
    	\begin{split}
    		&\left\langle A\left( x\right), D_{\Sigma}^{2}u\left( x\right)-\left\langle \Rmnum{2}\left( x\right), y\right\rangle\right\rangle \\
    		=&\mathrm{div}_{\Sigma}\left( A\left( \nabla^{\Sigma}u\right) \right)\left( x\right) 
    		-\left\langle \mathrm{div}_{\Sigma}A\left( x\right), \nabla^{\Sigma}u\left( x\right)\right\rangle
    		-\left\langle A\left( x\right) , \left\langle \Rmnum{2}\left( x\right), y\right\rangle\right\rangle \\
    		=&n\left( \mathrm{det}A\left( x\right) \right) ^{\frac{1}{n-1}}
    		-\sqrt{\left| \mathrm{div}_{\Sigma}A\right| ^{2}\left( x\right)+\left| \left\langle A, \Rmnum{2}\right\rangle\right| ^{2}\left(x\right)}	\\
    		&-\left\langle \mathrm{div}_{\Sigma}A\left( x\right), \nabla^{\Sigma}u\left( x\right)\right\rangle
    		- \left\langle A\left( x\right), \left\langle \Rmnum{2}\left( x\right), y\right\rangle\right\rangle \\
    		\leq& n\left( \mathrm{det}A\left( x\right) \right) ^{\frac{1}{n-1}}.
    	\end{split}
    \end{equation*}
    Since $A$ is symmetric positive definite, we derive by Lemma 3.7 and Lemma 2.1 that
    \begin{equation*}
    	\begin{split}
    		&\left| \mathrm{det}D\Phi_{r}\left( x, y\right)\right| \\
    		\leq& r^{m}\mathrm{det}\left(g+rD_{\Sigma}^{2}u\left( x\right)
    		-r\left\langle \Rmnum{2}\left( x\right), y\right\rangle \right)\\
    		=& r^{m}\dfrac{1}{\mathrm{det}A\left( x\right)}\mathrm{det}
    		\left[A\left( x\right) \circ \left(g+rD_{\Sigma}^{2}u\left( x\right)-r\left\langle \Rmnum{2}\left( x\right), y\right\rangle \right)\right] \\ 
    		\leq& r^{m}\dfrac{1}{\mathrm{det}A\left( x\right)}
    		\left(\dfrac{\mathrm{tr}\left( A\left( x\right) \left(g+rD_{\Sigma}^{2}u\left( x\right)-r\left\langle \Rmnum{2}\left( x\right), y\right\rangle \right)\right) }{n}\right)^{n}\\
    		=& r^{m}\dfrac{1}{\mathrm{det}A\left( x\right)}
    		\left(\dfrac{\left\langle A\left( x\right), g\right\rangle }{n}+\dfrac{ r\left\langle A\left( x\right), D_{\Sigma}^{2}u\left( x\right)-\left\langle \Rmnum{2}\left( x\right), y\right\rangle \right\rangle}{n}\right)^{n}\\
    		\leq& r^{m}\dfrac{1}{\mathrm{det}A\left( x\right)}
    		\left(\dfrac{\mathrm{tr}_{g}A\left( x\right)}{n}+r\left( \mathrm{det}A\left( x\right) \right) ^{\frac{1}{n-1}}\right)^{n}\\
    	\end{split}
    \end{equation*} 

    This lemma follows.
    
    \end{proof}
    \textbf{\emph{Proof of Theorem 1.1.}}
    Given a constant $\sigma$ such that $0<\sigma<1$, 
    by Lemma 3.2 and Lemma 3.8, we conclude that
    \begin{equation*}
    	\begin{split}
    		&\left\lbrace p\in M: \sigma r<d(x, p)<r \text{ for all } x\in\Sigma\right\rbrace   \\
    		\leq& \int_{\Omega}\left( 
    		\int_{\left\lbrace y\in T_{x}^{\bot}\Sigma:\sigma^{2}<\left| \Phi_{r}\left( x, y\right) \right|^{2}+\left| y\right|^{2}<1\right\rbrace } 
    		\left| \mathrm{det}D\Phi_{r}\left( x, y\right) \right| 1_{V_{r}}\left( x, y\right)
    		dy\right) d\mathrm{vol}\left( x\right) \\
    		\leq& \int_{\Omega}\left( 
    		\int_{\left\lbrace y\in T_{x}^{\bot}\Sigma:\sigma^{2}<\left| \nabla^{\Sigma}u\left( x\right) \right|^{2}+\left| y\right| ^{2}<1\right\rbrace } 
    	     r^{m}\dfrac{1}{\mathrm{det}A\left( x\right)}
    	    \left(\dfrac{\mathrm{tr}_{g}A\left( x\right)}{n}+r\left( \mathrm{det}A\left( x\right) \right) ^{\frac{1}{n-1}}\right)^{n} dy\right)d\mathrm{vol}\left( x\right) \\
    		=& \left| B^{m}\right|\int_{\Omega} 
    		\left[\left( 1-\left| \nabla^{\Sigma}u\left( x\right) \right|^{2}\right)^{\frac{m}{2}}-\left( \sigma^{2}-\left| \nabla^{\Sigma}u\left( x\right) \right|^{2}\right)^{\frac{m}{2}}_{+}\right]\\ 
    		 &\qquad\qquad\qquad\qquad\qquad r^{m}\dfrac{1}{\mathrm{det}A\left( x\right)}
    		\left(\dfrac{\mathrm{tr}_{g}A\left( x\right)}{n}+r\left( \mathrm{det}A\left( x\right) \right) ^{\frac{1}{n-1}}\right)^{n}d\mathrm{vol}\left( x\right)\\
    		\leq&\frac{m}{2}\left| B^{m}\right|\left( 1-\sigma^{2}\right) 
    		\int_{\Omega}  r^{m}\dfrac{1}{\mathrm{det}A\left( x\right)}
    		\left(\dfrac{\mathrm{tr}_{g}A\left( x\right)}{n}+r\left( \mathrm{det}A\left( x\right) \right) ^{\frac{1}{n-1}}\right)^{n}d\mathrm{vol}\left( x\right). \\
    	\end{split}
    \end{equation*}
    for all $r>0$. Because of $m\geq2$, the last inequality uses the mean value theorem
    \begin{equation*}
    	b^{\frac{m}{2}}-a^{\frac{m}{2}}\leq\frac{m}{2}\left( b-a\right)  
    \end{equation*}
    for $0\leq a\leq b\leq1$.
    Dividing by $r^{n+m}$ and letting $r\rightarrow\infty$, we have
    \begin{equation}
    	\left| B^{n+m}\right|\left( 1-\sigma^{n+m}\right)\theta
    	\leq\frac{m}{2}\left| B^{m}\right|\left( 1-\sigma^{2}\right) 
    	\int_{\Omega} \left( \mathrm{det}A\right) 
    	^{\frac{1}{n-1}}.
    \end{equation}
    Next, we dividing both side by $1-\sigma$ and letting $\sigma\rightarrow1$, then
    \begin{equation}
    	\left( n+m\right)\left| B^{n+m}\right|\theta
    	\leq m\left| B^{m}\right| 
    	\int_{\Omega} \left( \mathrm{det}A\right) 
    	^{\frac{1}{n-1}}
    	\leq m\left| B^{m}\right| 
    	\int_{\Sigma} \left( \mathrm{det}A\right) 
    	^{\frac{1}{n-1}}.
    \end{equation}
    That is
    \begin{equation} 
        \int_{\Sigma}\left( \mathrm{det}A\right)^{\frac{1}{n-1}}
        \geq \left[ \frac{\left( n+m\right)\left| B^{n+m}\right|}{m\left| B^{m}\right|}\right]\theta.
    \end{equation}
    This means that the above inequality is $\eqref{eq:2.2}$.
    From this, according to the beginning of the analysis, we can get $\eqref{eq:1.1}$.
    Thus, in the special case that $\Sigma$ is connected, Theorem 1.1 has been proved.
    
    It remains to consider the case when $\Sigma$ is disconnected. 
    In this case, we apply the inequality to each individual connected component of $\Sigma$, and compute the sum across all connected components. Notic that 
    \begin{equation*}
    	a^{\frac{n-1}{n}}+b^{\frac{n-1}{n}}
    	>a\left( a+b\right) ^{\frac{1}{n}}+b\left( a+b\right) ^{\frac{1}{n}}
    	=\left( a+b\right) ^{\frac{n-1}{n}}
    \end{equation*}   
    for $a, b>0$.
    we can conclude that
    \begin{equation*}
		\int_{\Sigma}\sqrt{\left| \mathrm{div}_{\Sigma}A\right| ^{2} +\left| \left\langle A, \Rmnum{2}\right\rangle\right| ^{2}}
		+\int_{\partial\Sigma}\left| A\left( \nu\right) \right| 
		> n\left[ \frac{\left( n+m\right)\left| B^{n+m}\right|}{m\left| B^{m}\right|}\right] 
		^{\frac{1}{n}}\theta^{\frac{1}{n}}
		\left( \int_{\Sigma} \left( \mathrm{det}A\right) 
	  	^{\frac{1}{n-1}}\right) ^{\frac{n-1}{n}}
    \end{equation*}    
    if $\Sigma$ is disconnected.
    This completes the proof of Theorem 1.1.\hfill$\Box$\\

	\section{Proof of Corollary 1.4 And Corollary 1.5}
	
	First, let's begin the proof of Corollary 1.4.
	
	\textbf{\emph{Proof of Corollary 1.4.}} 
	According to Theorem 1.1, the following inequality holds:
	\begin{equation*} 
		\int_{\Sigma}\sqrt{\left| \mathrm{div}_{\Sigma}A\right|^{2}+\left| \left\langle A,\Rmnum{2}\right\rangle \right|^{2}}
		+\int_{\partial\Sigma}\left| A\left( \nu\right) \right| 
		\geq n\left[ \frac{\left( n+m\right)\left| B^{n+m}\right|}{m\left| B^{m}\right|}\right] ^{\frac{1}{n}}\theta^{\frac{1}{n}}\left( \int_{\Sigma}\left( \mathrm{det}A\right)^{\frac{1}{n-1}}\right) ^{\frac{n-1}{n}}.
	\end{equation*}
	When $A=fg_{\Sigma}$ for a positive smooth function $f$ on $\Sigma$, 
    we find that
    \begin{equation*}
        \mathrm{div}_{\Sigma}A
        =\mathrm{div}\left( fg_{\Sigma}\right) 
        =f\mathrm{div}g_{\Sigma}+\left\langle \nabla^{\Sigma}f, g\right\rangle 
        =\nabla^{\Sigma}f.
    \end{equation*}
    Let $H$ denote the mean curvature vector of $\Sigma$, we have
    \begin{equation*}
    	\left\langle A,\Rmnum{2}\right\rangle
    	=\left\langle fg_{\Sigma}, \Rmnum{2}\right\rangle
    	=fH.
    \end{equation*}
    It's readily apparent that $A\left( \nu\right) :=A\left( \cdot, \nu\right)=fg\left( \nu\right)$. 
    Let $X$ be a tangent vector field on $M$, then
    $g\left( \nu\right)\left( X\right)=g\left( \nu, X\right)$,
    since $\left| \nu\right| =1$, 
    it is easy to show that
    $\left| A\left( \nu\right)\right| 
    =f\left| g\left( \nu\right)\right| =f$.
    In order to calculate the determinant of $A$,
    we first transform $A$ from a (0, 2)-tensor field to a (1, 1)-tensor field, 
    following this transformation, 
    we proceed to compute the determinant of $A$. 
    i. e. $A^{k}_{i}:=g^{jk}A_{ij}$, 
    $\tilde{g}=g^{jk}g_{ij}=\delta^{k}_{i}$.
    Consequently, we obtain 
    \begin{equation*}
        \mathrm{det}A
        =\mathrm{det}\left( fg_{\Sigma}\right) 
        =f^{n}\mathrm{det}g_{\Sigma}
        =f^{n}\mathrm{det}\tilde{g}
        =f^{n}.
    \end{equation*}
	Therefore, we arrive at the inequality as outlined in Theorem 1.3 of S. Brendle’s seminal paper in \cite{Bren}:
	\begin{equation*} 
		\int_{\Sigma}\sqrt{
		\left| \nabla^{\Sigma}f\right|^{2}
		+f^{2}\left| H\right|^{2}}
		+\int_{\partial\Sigma}f 
		\geq n\left[ \frac{\left( n+m\right)\left| B^{n+m}\right|}{m\left| B^{m}\right|}\right] ^{\frac{1}{n}}\theta^{\frac{1}{n}}
		\left( \int_{\Sigma}f^{\frac{n}{n-1}}\right) ^{\frac{n-1}{n}}.
	\end{equation*}
    \hfill$\Box$\\

	At last, we have the following proof of Corollary 1.5.
	
	\textbf{\emph{Proof of Corollary 1.5.}}	
	The contradiction can be given. 
	Assume that $(\Sigma, g_{\Sigma})$ is a closed n-dimension minimal 
	submanifold of $M$. 
	For Theorem 1.1, we can set $A=g$,
	and thus conclude that	
	\begin{equation*} 
		0\geq n\left[ \frac{\left( n+m\right)\left| B^{n+m}\right|}{m\left| B^{m}\right|}\right] ^{\frac{1}{n}}\theta^{\frac{1}{n}}
		\left| \Sigma\right|^{\frac{n-1}{n}}>0.
	\end{equation*}	
	It's a contradiction.
	This contradiction completes the proof.
	\hfill$\Box$\\

	   {\bf Acknowledgement.}

	This work was supported in part by 
	the National Natural Science Foundation of China (No. 12271163)
	and Science and Technology Commission of Shanghai Municipality (No. 22DZ2229014). 
	Additional support was provided by the School of Mathematical Sciences, Key Laboratory of MEA (Ministry of Education) and Shanghai Key Laboratory PMMP (No. 18DZ2271000), East China Normal University, Shanghai 200241, China.

\end{document}